\documentclass[UTF8,10pt]{article}
\setcounter{secnumdepth}{4}
\usepackage[left=1.91cm,right=1.91cm,top=2.54cm,bottom=2.54cm]{geometry}
\usepackage{amsfonts}
\usepackage{amsmath}
\usepackage{amssymb}
\usepackage{pgfplots}
\usepackage{tikz}
\usetikzlibrary{arrows}
\usepackage{titlesec}
\usepackage{bm}
\usepackage{appendix}
\usepackage{tikz-cd}
\usepackage{mathtools}
\usepackage{amsthm}
\usepackage{enumitem}
\setenumerate[1]{itemsep=0pt,partopsep=0pt,parsep=\parskip,topsep=5pt}
\setitemize[1]{itemsep=0pt,partopsep=0pt,parsep=\parskip,topsep=5pt}
\setdescription{itemsep=0pt,partopsep=0pt,parsep=\parskip,topsep=5pt}
\usepackage[colorlinks,linkcolor=black,anchorcolor=black,citecolor=black]{hyperref}
\usepackage{setspace}
\usepackage[numbers]{natbib}
\usepackage{cases}
\usepackage{fancyhdr}
%\usepackage[scaled=0.92]{helvet}	% set Helvetica as the sans-serif font
		% set Times as the default text font
\usepackage{txfonts}

% various theorems, numbered by section

\newtheorem{theorem}{Theorem}[section]
\newtheorem{proposition}{Proposition}[section]
\newtheorem{lemma}{Lemma}[section]
\newtheorem{corollary}{Corollary}[section]
\newtheorem{remark}{Remark}[section]

\numberwithin{equation}{section}

\numberwithin{equation}{section}
%\usepackage[bookmarksnumbered,pdfpagelabels=true,plainpages=false,colorlinks=true,
            %linkcolor=black,citecolor=black,urlcolor=black]{hyperref}
\setcounter{secnumdepth}{4}
\usepackage{xcolor}

\begin{document}

\bibliographystyle{plain}
\title{\textbf{
 GLOBAL AND LOCAL THEORY OF SKEW MEAN CURVATURE
FLOWS  }}

\author{Ze Li\thanks{School of Mathematics and Statistics, Ningbo University, Ningbo, 315211, Zhejiang, P.R. China. Email: \texttt{rikudosennin@163.com}} }
\date{ }
 \maketitle

%%%%%%%%%%%%%%%%%%%%%%%%%%%%%%%%%%%%%%%%%%%%%%%%%

%%%%%%%%%%%%%%%%%%%%%%%%%%%%%%%%%%%%%%%%%%%%%%%%

\begin{abstract}
In this paper, we study  the skew mean curvature flow.  The results are threefold. First, we prove the global regularity of   solutions with  initial data which are small perturbations of planes in Sobolev spaces.  Second, we prove  the modified scattering and the existence of wave operators for small data, which completely determines the set of asymptotic states.  Third, we study the Cauchy problem for arbitrary large data.
\end{abstract}

%\tableofcontents

\section{Introduction}

In view of geometric flows, the skew mean curvature flow (SMCF) evolves a codimension 2 submanifold along its binormal direction with a speed given by its
mean curvature. Precisely speaking, assume that $\Sigma$  is an $n$-dimensional oriented manifold and $(\mathcal{M},h)$ is an  $(n+2)$-dimensional oriented Riemannian manifold, then
SMCF is  a family of time-dependent immersions $F :{\Bbb I}\times \Sigma \to \mathcal{M}$, which satisfies
\begin{align}\label{1}
\left\{
  \begin{array}{ll}
     {\partial_t}F= J(F)\mathbf{H}(F), \mbox{ }(t,x)\in \Bbb I\times \Sigma& \hbox{ } \\
    F(0,x)=F_0(x) & \hbox{}
  \end{array}
\right.
\end{align}
where for each given $t\in {\Bbb I}$, $\mathbf{H}(F)$ denotes the mean curvature vector
of the submanifold $\Sigma_t:= F(t,\Sigma)$. Here, $J(F)$, which denotes the  natural induced complex structure for the normal bundle $N\Sigma_t$,  can be simply defined as rotating a vector in the normal space by $\frac{\pi}{2}$ positively (notice that $N\Sigma_t$ is of rank 2).

For $n=1$, the 1-dimensional SMCF in  $\Bbb R^3$ is the vortex filament equation
$
\partial_t v= \partial_sv\times \partial^2_sv
$
for $v:(s,t)\in \Bbb R\times \Bbb R\longmapsto v(s,t)\in \Bbb R^3$, where $t$ denotes time, $s$ denotes the arc-length parameter of the curve $v(t,\cdot)$,  and $\times$ denotes
the cross product in $\Bbb R^3$. The  vortex filament equation describes  the free motion
of a vortex filament, see Da Rios \cite{RGG}, Hasimoto \cite{HGGa}.
For $n\ge 2$, SMCF was deduced by both physicists and  mathematicians. The physical  motivations  are   the localized
induction approximation (LIA) of high dimensional  Euler equations and  asymptotic dynamics of vortices in
superconductivity and superfluidity, see Lin \cite{LGG1}, Jerrard \cite{jGGd}, Shashikanth \cite{SGGh}, Khesin \cite{KGG}, Arnold-Khesin \cite{AK}.
SMCF  also appears in various mathematical problems, see  Terng \cite{TGGe}, Marsden-Weinstein \cite{MW},  Haller-Vizman \cite{HV}.

Let us recall the non-exhaustive list of works on SMCF for $n\ge 2$. Song-Sun \cite{SGGoGGnGGgGG1}  proved local existence of SMCF for $F:\Sigma \to \Bbb R^{4}$ with compact oriented surface $\Sigma$. This was generalized by Song \cite{SGGoGGnGGgGG2} to $F:\Sigma \to \Bbb R^{n+2}$ with compact oriented surface $\Sigma$ for all $n\ge 2$, and \cite{SGGoGGnGGgGG2} proved uniqueness and  continuous dependence of solution on  initial data for  arbitrary oriented manifold $\Sigma$. Khesin-Yang \cite{KGGYgghh} constructed an example showing that the SMCF can blow up in finite time. In the PDE view,  Gomez \cite{GGhhh} proposed a way to write SMCF as a quasilinear Schr\"odinger equation system different from ours. For the small data global theory and local well-posedness theory  on general  quasilinear Schr\"odinger equations, see the pioneer  works of \cite{KP} and \cite{Ke1,Ke2,Ke3,MMT,MMT1} respectively.

It is remarkable that SMCF has a deep relationship with Schr\"odinger map flow (e.g. \cite{UT}), in fact,
\cite{SGGoGGnGGgGG3} proved that the Gauss map of an $m$-dimensional SMCF in $\Bbb R^{m+2}$ fulfills a Schr\"odinger map flow equation. This property also plays an important role in the proof of uniqueness and continuous  dependence on initial data of \cite{SGGoGGnGGgGG2}.

The small data global regularity  problem for SMCF was initiated in our previous work \cite{Li} where we proved Euclidean planes are stable under SMCF for small transversal perturbations in some $W^{2,q}\cap H^k$ space  with some $q\in(1,2)$.
In this work, our first aim is to remove the $W^{2,q}$ smallness and transversal assumption of \cite{Li} when $n\ge 3$.
Our second aim is to provide a  complete picture of long time dynamics of small solutions, especially determining what the set of asymptotic states is.
The main theorem is as follows.

Let $E:\Bbb R^{n}\to \Bbb R^{n+2}$ denote the plane
\begin{align}\label{2dVBn}
E(x)=(x_1,...,x_n,{\xi}_1\cdot x+a_1,{\xi}_2\cdot x+a_2),
\end{align}
where $\{{\xi}_i\}^{2}_{i=1}\subset\Bbb R^n$ are given constant vectors, and $a_1,a_2\in \Bbb R$ are given constants. Let ${H}^{\sigma}_{E}$ be maps $F:\Bbb R^n\to \Bbb R^{n+2}$ which satisfy
\begin{align*}
\|F(x) -E(x)\|_{ H^{\sigma}(\Bbb R^{n})} <\infty.
\end{align*}

\begin{theorem}\label{T2}
Assume that $n\ge 3$, $k \in \Bbb Z_+$,   $  k\ge n+4$. There exists a sufficiently small constant $\epsilon_*>0$  such that if $F_0\in  {H}^{{k}}_{E}$ satisfies
\begin{align}\label{dVBn}
\|F_0(x)-E(x)\|_{ H^{k}} \le \epsilon_*,
\end{align}
then there exists a global unique solution $F\in C([-T,T]; H^{{k}}_{E})$ for all $T>0$  to SMCF with initial data $F_0$. Moreover, there exist an orthogonal transformation ${\Bbb B}\in O(n+2)$, a constant vector $\vec{e}\in \Bbb R^{n+2}$, a sufficiently smooth homeomorphism $\tilde{\Phi}:\Bbb R^{n}\to \Bbb R^{n}$, and time independent complex valued  functions $\phi_{\pm}\in H^{k}$ for which
\begin{align*}
{\Bbb B} {F}_0(\tilde{\Phi}(x))-\vec{e}=(x,0,0),\mbox{ }\forall x\in \Bbb R^n,
\end{align*}
 and
\begin{align}
&\lim\limits_{t\to \pm \infty}\|  ({\Bbb B F-\vec{e}})^{n+1}(t,\tilde{\Phi}(x)) + \mathrm{i}({\Bbb B} F-\vec{e})^{n+2}(t,\tilde{\Phi}(x)) -e^{\mathrm{i}t\Delta}\phi_{\pm} \|_{H^{k_n}}=0,\label{scatter}
\end{align}
where $k_n$ is the smallest integer that is bigger than or equal to  $\frac{n}{2}+1$. Furthermore, if $F_0\in \cap_{\sigma\ge 0} H^{\sigma}_{E}$, then $F(t)$ is  smooth and  belongs to $C([-T,T]; H^{{\sigma}}_{E})$ for all $T>0,\sigma\ge 0$.

Moreover, there exists $\varepsilon_*>0$ sufficiently small so that the following holds:  For any given  $\phi_{\infty}\in H^{k}$ with $\|\phi_{\infty}\|_{H^{k}}\le \varepsilon_*$, there exists initial data $F_0$ satisfying
$$\|F_0-(x,0,0)\|_{{H}^{k}}\le C\varepsilon_*,$$
 so that the corresponding solution of SMCF fulfills
 \begin{align}
\lim\limits_{t\to \infty}\|  F^{n+1}(t,x) + \mathrm{i}F^{n+2}(t,x) -e^{\mathrm{i}t\Delta}\phi_{\infty} \|_{H^{k_n}}=0.\label{scatter2}
\end{align}
\end{theorem}

\begin{remark}
Theorem \ref{T2} indeed includes three results, i.e., the global regularity of SMCF with  small data in Sobolev spaces, the long time behaviors of the solution, and the existence of wave operators.
\end{remark}

\begin{remark}
Theorem \ref{T2} still  holds with (\ref{dVBn}) replaced by $\|F_0-E\|_{\dot{H}^1\cap \dot{H}^{k}}\le \epsilon_*$, $F_0-E\in L^2$. The main arguments of this paper with more detailed analysis could low down the regularity index  $k$ of Theorem \ref{T2}, but it seems that some basic new ideas are needed for the critical regularity index $\frac{n}{2}+1$.
\end{remark}

Let us describe the main idea in proving  Theorem \ref{T2}. The whole proof is divided into four steps. The first step is to choose suitable coordinates for $ \Sigma$ and appropriate frames for $N\Sigma_t$  to simplify the equation. The immersions $\{F(t)\}$ under the new coordinates and frames are distorted to be a complex valued function $\phi(t,x)$, which satisfies a quasilinear Schr\"odinger equation.  Second, applying Keel-Tao's endpoint Strichartz estimates to the quasilinear Schr\"odinger equation of $\phi$ gives spacetime bounds for the second fundamental form. Third,   spacetime bounds and  energy estimates of the second fundamental form yield its uniform Sobolev norm bounds. Forth, the Sobolev norms of the map $F$ can be then controlled by that of  second fundamental forms, and thus proving uniform bounds of Sobolev norms of $F$. This road map with a bootstrap argument finishes the proof of global regularity. The long time dynamics and existence of wave operators can be  reduced to spacetime bounds of  $F$ as well.
In the view of PDE, the proof here is an endpoint refinement of Klainerman-Ponce \cite{KP}'s method.

\cite{SGGoGGnGGgGG2} raised the open problem of proving global theory of SMCF with certain small energy. Our previous work \cite{Li} studied  this problem for initial data which are small perturbations of Euclidean planes in non-$L^2$ Sobolev spaces. From the geometric view,  it is natural to use $L^2$ Sobolev norms of second fundamental forms as the realization  of ``certain small energy". Theorem 1.1 considered  initial data of small $H^k$ norms, and we remark that this is  indeed almost equivalent to studying data of small second fundamental forms in Sobolev spaces up to  differmorphisms in $\Sigma$. On the other side, as far as we know very few quasilinear dispersive PDEs have small data global theory in $H^k$ spaces. In fact,  most results assume data are small in weighted Sobolev spaces or in   non-$L^2$ Sobolev spaces.  Here in Theorem 1.1, for the highly quasilinear SMCF, we only assume the data are small in $H^k$ spaces.

In  Section 7, we study the local Cauchy problem for arbitrary large data in $H^{\sigma}_{E}$, see Theorem 7.1 and Corollary \ref{z222}.
The local Cauchy problem for arbitrary large data is not trivial since SMCF is a  quasilinear equation. Moreover, to prove well-posedness for large data it is generally necessary to impose Mizohata's integrability condition(\cite{M1,M2,M3}). For SMCF,  the geometric structure of the flow and geometric energy methods enable us to compensate the derivative loss, and no additional integrability conditions are needed. In addition, we study  not only the intrinsic geometric quantities but also norms of the extrinsic immersion $F$, which largely simplifies  the compactness argument and   provides the existence of strong solutions besides classic solutions.

The paper is organized as follows. In Section 2, we reduce SMCF to a quasilinear Schr\"odinger equation by choosing suitable coordinates and frames. In Section 3, we recall the local  well-posedness and high order energy estimates. In Section 4, we prove the global regularity and long time dynamics. In Section 5, we prove the existence of wave operators. In  Section 7, we study the local Cauchy problem for arbitrary large data in $H^{\sigma}_{E}$.

\noindent{\bf Notations.}
We denote  $A\lesssim B$ if there exists some universal constant $C>0$ such that $A\le CB$.
The notation $C_{\alpha_1,...,\alpha_j}$ denotes some constant depending on the parameters $\alpha_1,...,\alpha_j$, and it generally may vary  from line to line.
Denote $\langle x\rangle =\sqrt{1+|x|^2}$.
The Fourier transform $f\mapsto \widehat{f}$ is defined by
\begin{align*}
\widehat{f}(\eta)=\int_{\Bbb R^n} f(x)e^{-i\eta\cdot x}dx.
\end{align*}
The usual $L^2$ type Sobolev spaces $H^s$  on $\Bbb R^n$ are defined by
\begin{align*}
\|f\|_{H^{s}}=\|\langle \eta\rangle^{s}\widehat{f}(\eta)\|_{L^2_{\eta}}.
\end{align*}
Define $H^{\infty}=\cap^{\infty}_{s=0}H^s$. And for $k\in\Bbb Z_+, p\in[1,\infty]$, the $L^p$ type Sobolev spaces $W^{k,p}$ on $\Bbb R^n$  are defined by
\begin{align*}
\|f\|_{W^{k,p}}=\sum^{k}_{l=0}\|D^{l}f\|_{L^p}.
\end{align*}

For $p,q\in[1,\infty]$ and ${\Bbb J}\subset \Bbb R$, denote
\begin{align*}
\|f\|_{L^p_tL^q_x({\Bbb J}\times \Bbb R^n)}=(\int_{\Bbb J}(\int_{\Bbb R^n}|f|^qdx)^{\frac{p}{q}}dt)^{\frac{1}{p}}
\end{align*}
with standard modifications if either $p$ or $q$ equals  $\infty$.

\section{Master equation}

\subsection{Invariant transformations of SMCF}

Let $F:\Sigma\to \Bbb R^{n+2}$ be a 2-codimensional immersion and $\mathbf{H}$ be the mean curvature vector. Then
\begin{align*}
\Delta_g F={\mathbf{H}},
\end{align*}
where $\Delta_g$ denotes the Laplacian on $\Sigma$ of the induced metric $g$  given by
\begin{align*}
g_{ij} ={\partial_{x_i}F\cdot\partial_{x_j}F}.
\end{align*}
Therefore, SMCF reduces to
\begin{align}\label{Hbj00}
\partial_tF=J(F)(\Delta_{g} F), \mbox{ }g_{ij} ={\partial_{x_i}F\cdot\partial_{x_j}F}.
\end{align}

\begin{lemma}\label{dd2}
Let $\Sigma$ be an $n$-dimensional  oriented manifold and $F:\Sigma\times \Bbb I\to \Bbb R^{n+2}$ be a solution to SMCF.   Then, given   a differmorphism $\chi:\Sigma\to \Sigma$,  $F(\chi(x),t)$ is still a solution to SMCF.
Moreover,  for any given  transform $B\in SO(n+2)$ and constant vector $b\in \Bbb R^{n+2}$, $BF(x,t)+b$ solves SMCF as well.
And if  $S\in O(n+2)$ with ${\rm det}(S)=-1$, then  $\acute{F}:=SF(x,t)+b$ solves
\begin{align*}
\partial_t\acute{F}=-J(\acute{F}){\bf H}(\acute{F}).
\end{align*}
\end{lemma}
\begin{proof}
The fact that SMCF is invariant under differmorphisms of $\Sigma$ follows from the invariance of ${\mathbf{H}}$ under coordinate transformations of $\Sigma$.
Given a transform $B\in SO(n+2)$ and constant vector $b\in \Bbb R^{n+2}$, denote $\check{F}(x,t)=BF(x,t)+b$. Then  one has
\begin{align*}
&\partial_t\check{F}= B\partial_t F, \mbox{ }\mbox{ }\partial_i\check{F}\cdot \partial_j\check{F}=\partial_{i}F\cdot\partial_j F\\
&\Delta_{\check{g}}\check{F}=B\Delta_{g}F.
\end{align*}
Given $t\in\Bbb I$,  denote $\check{\Sigma}_t=\check{F}(t)(\Sigma)$. Then  the corresponding normal bundle and tangent bundle of $\check{\Sigma}_t$ at $x$ now are $B(N\Sigma_t) $ and $B(T\Sigma_t)$ respectively. Since $B$ preserves  both the orientation  and the metric of $\Bbb R^{n+2}$, we see
\begin{align}\label{67gb}
J(\check{F})B\eta= BJ(F)\eta, \mbox{ }\forall \eta\in N\Sigma_t.
\end{align}
Thus (\ref{Hbj00}) shows $\check{F}$ solves SMCF as well.

It remain to consider $S\in O(n+2)$ with ${\rm det}(S)=-1$. Observe that it suffices to  replace  (\ref{67gb})  by $J(\acute{F})S\eta=- SJ(\acute{F})\eta$, for any $ \eta\in N\Sigma_t$.

\end{proof}

\subsection{Reduction to graph solutions}

We first do an orthogonal  transformation to put vectors  $\xi_1,\xi_2$ in (\ref{dVBn}) and (\ref{2dVBn}) to be zero.
In fact, there exists an orthogonal  transformation $\mathbb{B}\in O(n+2)$ such that the $(n+1)$-th and $(n+2)$-th components of $ {\Bbb B} E(x)$ are constants, i.e.
\begin{align*}
&({\Bbb B} E(x))^{n+1}=\tilde{a}_1, \mbox{ }({\Bbb B} E(x))^{n+2}=\tilde{a}_2,\mbox{ }\forall x\in \Bbb R^n\\
&\left(({\Bbb B} E(x))^{1},...,({\Bbb B} E(x))^{n}\right)^t={\Bbb A}x+b, \mbox{ }\forall x\in \Bbb R^n,
\end{align*}
for some $\tilde{a}_1,\tilde{a}_2\in \Bbb R$, ${\Bbb A}\in GL(n)$, and $b\in \Bbb R^n$.
Then  (\ref{dVBn}) now reads as
\begin{align*}
\| {\Bbb B}F_0-({\Bbb A}x+b,\tilde{a}_1,\tilde{a}_2)\|_{H^{k}(\Bbb R^n)}\le \epsilon_*.
\end{align*}

Set
\begin{align}\label{78vv}
\mathcal{{{F}}}_0(x)={\Bbb B}F_0({\Bbb A}^{-1}x)-(b,\tilde{a}_1,\tilde{a}_2).
\end{align}
Thus
\begin{align}\label{Gh78vv}
\|  \mathcal{{F}}_0(x)-( x,0,0)\|_{H^{k}(\Bbb R^n)}\lesssim \epsilon_*.
\end{align}

Second, we choose suitable coordinates for $\Sigma$.
Implicit function theorem and Sobolev embedding give
\begin{lemma}\label{bhhjkmn}
Let $n\ge 1, e\in\Bbb N, e>\frac{n}{2}+1$. Assume that
$F_0$   satisfies
\begin{align*}
\| F_0(x)-(x_1,...,x_n,0,0)\|_{H^e_x}\le \varepsilon.
\end{align*}
If $\varepsilon>0$ is sufficiently small depending on $n$, then there exists a homeomorphism  $\Phi:\Bbb R^n\to \Bbb R^n$ such that
\begin{align*}
 (F^1_0(\Phi(x)),...,F^n_0(\Phi(x)))=(x_1,...,x_n),\mbox{ }\forall x\in\Bbb R^{n},
\end{align*}
and $\Phi$ is near the identity map in the sense that
\begin{align}\label{jnnmk878}
\|\Phi-I\|_{C^0}+\|\Phi-I\|_{{\dot{H}^1}\cap {\dot H}^{e}}\lesssim  \varepsilon.
\end{align}
\end{lemma}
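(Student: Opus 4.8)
The plan is to build the map $\Phi$ as the inverse of the ``horizontal projection'' of $F_0$ and then quantify how close it is to the identity using the smallness hypothesis together with Sobolev embedding. Concretely, write $F_0(x)=(x_1,\dots,x_n,0,0)+G(x)$ with $G=(G^1,\dots,G^{n+2})$ and $\|G\|_{H^e}\le\varepsilon$, and consider the map $\Psi:\Bbb R^n\to\Bbb R^n$ defined by $\Psi(x)=(F_0^1(x),\dots,F_0^n(x))=x+(G^1(x),\dots,G^n(x))$. Since $e>\tfrac n2+1$, the embedding $H^{e-1}(\Bbb R^n)\hookrightarrow L^\infty(\Bbb R^n)$ applies to the first derivatives of $G$, so $\|DG\|_{L^\infty}\lesssim\|G\|_{H^e}\lesssim\varepsilon$. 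Hence for $\varepsilon$ small (depending only on $n$, through the embedding constant) the Jacobian $D\Psi=I+D(G^1,\dots,G^n)$ is everywhere invertible with uniformly bounded inverse, and $\Psi$ is a local diffeomorphism.

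Next I would upgrade ``local diffeomorphism'' to ``global diffeomorphism of $\Bbb R^n$''. The standard tool is Hadamard's global inverse function theorem: a $C^1$ local diffeomorphism of $\Bbb R^n$ that is proper (equivalently $|\Psi(x)|\to\infty$ as $|x|\to\infty$) is a diffeomorphism onto $\Bbb R^n$. Properness is immediate here because $|\Psi(x)-x|=|(G^1,\dots,G^n)(x)|\le\|G\|_{L^\infty}\lesssim\varepsilon<\infty$, so $\Psi$ is a bounded perturbation of the identity; in fact for $\varepsilon$ small one even has the Lipschitz estimate $|\Psi(x)-\Psi(y)-(x-y)|\le\tfrac12|x-y|$, which already gives injectivity and, combined with openness of the image, surjectivity. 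Then set $\Phi:=\Psi^{-1}$. By construction $(F_0^1(\Phi(x)),\dots,F_0^n(\Phi(x)))=\Psi(\Phi(x))=x$, which is the first claimed identity. Smoothness of $\Phi$ to all orders follows from $F_0\in\mathcal H^\infty_E$ (so $\Psi\in C^\infty$) together with the inverse function theorem applied iteratively.

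It remains to prove the quantitative bound $\|\Phi-I\|_{H^e}\lesssim\varepsilon$. Writing $H:=\Phi-I$, the identity $\Psi\circ\Phi=\mathrm{id}$ becomes $H(x)=-(G^1,\dots,G^n)(\Phi(x))=-(G^1,\dots,G^n)(x+H(x))$. The $L^2$ part is controlled by a change of variables: since $\Phi$ has Jacobian bounded above and below ($\det D\Phi$ close to $1$), $\|H\|_{L^2}^2=\int|(G^1,\dots,G^n)(\Phi(x))|^2\,dx\lesssim\int|(G^1,\dots,G^n)(y)|^2\,dy\lesssim\varepsilon^2$. For the derivatives I would differentiate the fixed-point relation and estimate by induction on the order: $D^\alpha H$ is a sum of terms of the form $(D^\beta G^j)(x+H(x))$ times products of lower-order derivatives of $H$; using the Moser-type product and composition estimates in $H^e$ (valid because $e>\tfrac n2$, so $H^e$ is an algebra and composition with a near-identity $H^e$-map is bounded), together with the already-established $\|DG\|_{L^\infty}\lesssim\varepsilon$ to absorb the top-order term $(DG)(x+H)\,DH$ into the left side, yields $\|H\|_{H^e}\lesssim\|G\|_{H^e}\lesssim\varepsilon$. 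The main obstacle is precisely this last step: making the composition/product estimates in $H^e$ rigorous for a map that is only $H^e$-close (not uniformly small in high norms) to the identity, and correctly closing the induction so that the top-order derivative of $H$ is absorbed rather than merely bounded; the key mechanism that makes it work is that the coefficient of $DH$ in the differentiated equation is $DG$, which is small in $L^\infty$, so a standard absorption argument applies at each order.
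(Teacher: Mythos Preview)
Your proposal is correct and follows essentially the same approach as the paper: both define the horizontal projection $\Psi=\mathbb F_0=(F_0^1,\dots,F_0^n)$, use Sobolev embedding $H^e\hookrightarrow C^1$ to see it is a $C^1$-small perturbation of the identity, invert it to obtain $\Phi$, and then control $\|\Phi-I\|_{H^e}$ by differentiating the relation $\mathbb F_0\circ\Phi=\mathrm{id}$ inductively and using that $H^e$ is an algebra. Your treatment is in fact more careful than the paper's on two points: you justify \emph{global} invertibility via Hadamard's theorem (the paper simply invokes the implicit function theorem without addressing globality), and you make explicit the composition $G\circ\Phi$ appearing in the derivative formula together with the absorption mechanism for the top-order term, whereas the paper's one-line appeal to ``(\ref{xcvnmmm}) and the fact that $H^e$ is an algebra'' leaves these details implicit.
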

\begin{proof}
Let $\Bbb F_0:\Bbb R^{n}\to \Bbb R^n$ be
\begin{align*}
{\Bbb F}_0(x)=(F^1_0(x),...,F^n_0(x)).
\end{align*}
By Sobolev embedding
\begin{align*}
\| \mathbb{F}_0-I\|_{C^1_x(\Bbb R^n)}\lesssim \varepsilon.
\end{align*}
By implicit function theorem, for $\varepsilon$ sufficiently small, there exists a unique $C^1$ map $\Phi:\Bbb R^n\to \Bbb R^n$ such that
\begin{align*}
 \mathbb{F}_0(\Phi(x))=x,\mbox{ }\forall x\in\Bbb R^{n}.
\end{align*}
Formally one has
\begin{align}\label{xcvnmmm}
D^{l}\Phi = (D\mathbb{F}_0)^{-1}(D^{l-1}I-\sum_{j+i=l+1, 1\le i\le l-1}c_{j,i}D^{j}\mathbb{F}_0D^{i}\Phi).
\end{align}
So by induction,
the inequality (\ref{jnnmk878}) follows by (\ref{xcvnmmm}) and the fact that $H^{e}$ is an algebra.
\end{proof}

Let $\mathcal{F}_0$ satisfy (\ref{Gh78vv}).
Set
\begin{align}\label{vVvghbm}
\tilde{x}_1=\mathcal{F}^1_0(x),..., \tilde{x}_n=\mathcal{F}^n_0(x).
\end{align}
Define the map $\widetilde{F}_0:\Bbb R^{n}\to \Bbb R^{n+2}$ by
\begin{align}\label{jnmmml}
\widetilde{F}_0(\tilde{x})=(\tilde{x}_1,...,\tilde{x}_n, \tilde{F}^{n+1}_0(\tilde{x}),\tilde{F}^{n+2}_0(\tilde{x})),
\end{align}
where $\{\tilde{F}^{j}_0(\tilde{x})\}^{n+2}_{j=n+1}$ are defined by
\begin{align}\label{2Vbnnm}
 \tilde{F}^{j}_0(\tilde{x})=\mathcal{F}^{j}_0(\Phi(\tilde{x})).
\end{align}
Thus, because of the transformation invariance in Lemma \ref{dd2}, it suffices to study graph like initial data as (\ref{jnmmml}). To be precise, let us state a parallel result.

{\bf Theorem 1.1'}
{\it Let $n\ge 3$ and $k$ be an integer such that  $k\ge n+4$. There exists a sufficiently small constant $\epsilon>0$ so that if $F_0\in H^{k}_{E}$ satisfies
\begin{align*}
&F_0(x)=(x_1,...,x_n,u^1_0(x),u^2_0(x)),\mbox{ }\forall x\in\Bbb R^n\\
&\|u^1_0\|_{ H^{k}_x}+\|u^2_0\|_{ H^{k}_x} \le \epsilon,
\end{align*}
then there exists a global unique solution to SMCF of the form
$$F(x,t)=(x_1,...,x_n,u_1(x,t),u_2(x,t))$$
with initial data $F_0$ so that $u_1,u_2\in C([-T,T];H^{k})$ for all $T>0$. Moreover, there exist time independent complex valued  functions $\phi_{\pm}\in H^{k_n}$ such that
\begin{align}\label{kmnll}
\lim\limits_{t\to \pm \infty}\|  u^{1}(t,x) + \mathrm{i}u^{2}(t,x) -e^{\mathrm{i}t\Delta}\phi_{\pm} \|_{H^{k_n}_x}=0,
\end{align}
 where $k_n$ is the smallest integer that is bigger than or equal to  $\frac{n}{2}+1$. In addition, if $F_0\in H^{\infty}_{E}$, then $F(t)$ is smooth and belongs to $C([-T,T];H^{\sigma}_{E})$ for all $T>0,\sigma\ge 0$.}

\begin{lemma}\label{vcfghfghb}
Theorem 1.1' implies the global regularity result and asymptotic behavior result (\ref{scatter}) in Theorem 1.1.
\end{lemma}
\begin{proof}
Given $F_0$ satisfying conditions of Theorem 1.1, we have seen there exist ${\Bbb B}\in O(n+2)$, ${\Bbb A}\in GL(n)$, $\tilde{a}_1,\tilde{a}_2\in \Bbb R$, $b\in \Bbb R^{n}$ such that (\ref{Gh78vv}) holds   for the $\mathcal{F}_0$ defined in (\ref{78vv}).
Without loss of generality, we assume ${\rm det}(\Bbb B)=1$, otherwise, it suffices to redefine the complex structure $J$. Furthermore, Lemma \ref{bhhjkmn} shows there exists
a homeomorphism $\Phi:\Bbb R^n\to \Bbb R^n$ such that claims in Lemma \ref{bhhjkmn} hold.
Define
\begin{align}\label{ccVbnnm}
u^1_0(\tilde{x}):=\mathcal{F}^{n+1}_0(\Phi(\tilde{x})), \mbox{ }u^2_0(\tilde{x}):=\mathcal{F}^{n+2}_0(\Phi(\tilde{x})).
\end{align}
Then Lemma \ref{bhhjkmn}  implies $u^1_0,u^2_0$ satisfy
\begin{align*}
\|u^1_0\|_{ H^{k} }+\|u^2_0\|_{ H^{k} } \lesssim \epsilon.
\end{align*}
Thus  Theorem 1.1' shows there exists a global unique  solution to SMCF with initial data $(\tilde{x}_1,...,\tilde{x}_n,u^1_0,u^2_0)$ in $C([-T,T];H^{k}_{E})$ for all $T>0$. Moreover, there exist time independent complex valued  functions $\phi_{\pm}\in H^{k_n}$ such that (\ref{kmnll}) holds. Denote this solution by $\mathbf{F}(\tilde{x},t)$.
Define $F(x,t)$ by
\begin{align}\label{ccVbnnbbm}
F(x,t)={\Bbb B}^{-1}\mathbf{F}( \Phi^{-1}{{\Bbb A}x},t)+{\Bbb B}^{-1}\vec{e}, \mbox{ } \forall (x,t)\in\Bbb R^n\times\Bbb R,
\end{align}
where ${\vec e}=(b,\tilde{a}_1,\tilde{a}_2)$ in (2.3). Then $F$ solves the  SMCF equation with initial data  $F_0$.
Moreover, (\ref{scatter}) follows by (\ref{kmnll}). In addition, if $F_0\in H^{\infty}_{E}$, then $\Phi$ in Lemma \ref{bhhjkmn} is a differmorphism. And thus $F(x,t)$ is smooth and belongs to $C([-T,T];H^{\sigma}_{E})$ for all $T>0,\sigma>0$ by Theorem 1.1'.
\end{proof}

In the rest of Section 2, Section 3, and Section 4.1 to Section 4.2, we prove Theorem 1.1'. If Theorem 1.1' is done, the first two claims of  Theorem 1.1 follow   as a corollary of  Lemma  \ref{vcfghfghb}. In the Section 5, we prove the existence of wave operators.

\subsection{Reduction to quasilinear Schr\"odinger  equations }

Let us consider the case when $F$ is represented by a graph, i.e. $F(x,t)=(x_1,...,x_n,u_1,u_2)$, where $u_1,u_2$ are functions of $x,t$.
Then  our previous work \cite{Li} implies   (\ref{1}) indeed reduces to
\begin{align}\label{sGGmGGkGG3}
\left\{
  \begin{array}{ll}
    \partial_t u_1&=  -\frac{1}{\Lambda  \sqrt{1+|\partial_x u_1|^2}}g^{ij} \partial^2_{x_ix_j}u_2  \hbox{ } \\
    \partial_t u_2&=   \frac{1}{\Lambda  \sqrt{1+|\partial_x u_1|^2}}g^{ij}\partial^2_{x_ix_j} u_1, \hbox{ }
  \end{array}
\right.
\end{align}
where we denote
\begin{align}
\Lambda&=\left(|\partial_x u_2-\frac{\partial_{x}u_2\cdot\partial_x u_1}{1+|\partial_x u_1|^2}\partial_x u_1|^2+|\frac{\partial_x u_1\cdot \partial_x u_2}{1+|\partial_x u_1|^2}|^2+1\right)^{\frac{1}{2}}.
\end{align}
Here, $\partial_xf=(\partial_{x_1}f,...,\partial_{x_n}f)$, and $|\partial_x f|$  denotes its Euclidean norm in $\Bbb R^{n}$.
Letting $\phi=u_1+ \mathrm{{i}}u_2$, one observes that  (\ref{sGGmGGkGG3}) is indeed a quasilinear Schr\"odinger equation:
\begin{align}\label{mGGsGGs}
 \mathrm{i}\partial_t \phi+ \Delta \phi=G(\partial^2_x\phi^{\pm},\partial_x {\phi}^{\pm}),
\end{align}
 where  ${\phi^{+}}$ and ${\phi^{-}}$ denote $\phi$ and $\bar{\phi}$ respectively.
The RHS of (\ref{mGGsGGs}) can be written as
\begin{align}\label{dGGdGGmGGsGGs}
{\sum \alpha_{iji'j'}} \partial^2_{x_ix_j}\phi^{\pm}  \partial_{x_{i'}}\phi^{\pm} \partial_{x_{j'}}\phi^{\pm}+\mathcal{R},
\end{align}
where $\alpha_{iji'j'}$ are universal constants   and the remainder  {$\mathcal{R}$} point-wisely fulfills
\begin{align}\label{mGGq}
|\partial^{j}_x \mathcal{R}|\le C_{j}\sum_{i_1+...+i_m\le j,4\le m\le 2^{10j} }|\partial^{i_1}_x\partial^2_{x}\phi^{\pm}||\partial^{i_2}_x\partial_x\phi^{\pm}|...|\partial^{i_m}_x\partial_x\phi^{\pm}|
\end{align}
for any $j\ge 0$ if $\|\partial_x\phi^{\pm}\|_{L^{\infty}_{t,x}}$ is sufficiently small.

\begin{remark}
The orthogonal transformation ${\Bbb B}$ in (\ref{78vv}) is important. Observe that if one directly chooses a diffeomorphism $\widetilde{\chi}:\Bbb R^n\to \Bbb R^n$ so that $F^{j}_0(\widetilde{\chi}(x))=x_j$ for all $j=1,...,n$, the appropriate unknown function shall be
$$\widetilde{ \phi}(x,t)=(F^{n+1}(\widetilde\chi(x),t)-\xi_1\cdot\widetilde\chi(x)-a_1)+  \mathrm{i}(F^{n+2}(\widetilde\chi(x),t)-\xi_2\cdot\widetilde\chi(x)-a_2).
$$
Then $\widetilde{ \phi}$ satisfies a quasilinear Schr\"odinger equation
\begin{align*}
 \mathrm{i}\partial_t \widetilde{\phi}+\sum A_{ij}(x)\partial^2_{ij}\widetilde{ \phi}+ \sum b_{j}(x)\partial_{j}\widetilde{ \phi}=\widetilde{G}(\partial^2_x\widetilde{\phi}^{\pm},\partial_x{\widetilde{\phi}^{\pm}},x),
\end{align*}
whose linear part is a Schr\"odinger operator with variable coefficients and the nonlinear part is
is quadratic in $\widetilde{\phi}$. But observe that the linear part of (\ref{mGGsGGs}) is purely the free Schr\"odinger operator and the nonlinear part is cubic in ${\phi}$.
Comparing these two different master equations, we see the orthogonal transformation ${\Bbb B}$ in (\ref{78vv}) essentially simplifies the whole issue. This also reveals the importance of gauge invariance.
\end{remark}

\section{Local Cauchy theory and energy estimates}

Let $F :{\Bbb I}\times \Sigma\to  \Bbb R^{n+2}$ be an immersion.  For each $t\in \Bbb I$, let $g=g(t)$
denote  the induced metric on $\Sigma$, and  denote  $d\mu=d\mu(t)$ the corresponding volume form on $\Sigma$. The pullback bundle $F^*T \Bbb R^{n+2}$ defined over the base manifold $\Bbb I\times \Sigma$ naturally splits into  the ``spatial'' subbundle and the normal bundle. Pulling back the metric and connection of $\Bbb R^{n+2}$ naturally  induces a metric $g^{\mathfrak{{N}}}$ and connection $\nabla^{\mathfrak{{N}}}$ defined on the normal bundle. For simplicity, we write $\nabla$  instead of  $\nabla^{\mathfrak{{N}}}$. See Section 7.2  for a detailed presentation.

Assume that $F:\Bbb I\times \Sigma\to \Bbb R^{n+2}$ is an immersion given by
\begin{align}\label{hhhhhhhhhh}
F(t,x_1,...,x_n)=(x_1,...,x_n,u_1(t,x_1,..,x_n),u_{2}(t,x_1,...,x_n)).
\end{align}
Denote $\mathbf{A}$   the associated second fundamental of $\Sigma_t:=Graph(u)$.
Define the Sobolev norm of ${\bf A}$ by
\begin{align*}
\|{\bf{ A}}\|_{H^{m,p}}=\left(\sum^{m}_{i=0}\int_{\Sigma} |\nabla^{i} {\bf A}|^p_{g}d\mu\right)^{\frac{1}{p}}.
\end{align*}
Let $D^2$ denote the common Hessian operator in $\Bbb R^n$.
And define the usual Sobolev norm of $D^2 u$ by
\begin{align*}
\|D^2 u\|_{W^{m,p}}=\left(\sum^{m}_{j=0}\int_{\Sigma} |D^{j} D^2u|^pdx\right)^{\frac{1}{p}}.
\end{align*}

We recall the   energy estimates obtained by Song-Sun \cite{SGGoGGnGGgGG1} and Song \cite{SGGoGGnGGgGG2}   in the following  lemma.
\begin{lemma}[\cite{SGGoGGnGGgGG1,SGGoGGnGGgGG2}]\label{GeG}
Let $F:\Bbb I\times \Bbb R^n\to \Bbb R^{n+2}$ be of the form (\ref{hhhhhhhhhh}). With above  notations, we have
\begin{align}
|{\bf A}|^2_g&\le|D^2 u|^2\le (1+|Du|^2)^3|{\bf A}|^2_g, \mbox{ }\forall (t,x)\in\Bbb I\times \Sigma\label{xv32}.
\end{align}
Moreover,  given integer $j\ge 0$ there exists a polynomial $P_{j}$ depending only on $j$ such that
\begin{align}
|\nabla^{j}{\bf A}|_g&\le|D^{j+2} u|+P_{j}(|Du|)\sum|D^{i_1+1}u|...|D^{i_m+1}u|\label{xv33}\\
|D^{j+2}u|&\le (1+|Du|^2)^{\frac{j+3}{2}}|\nabla^2{\bf A}|_g  +P_{j}(|Du|)\sum|D^{i_1+1}u|...|D^{i_m+1}u|\label{xv34}
\end{align}
where the summations are taken over the indices $(i_1,...,i_m)$ fulfilling
\begin{align*}
i_1+...+i_m=j+1, \mbox{ }i_1\ge i_2\ge ...\ge i_m, \mbox{ } j \ge i_1\ge i_m\ge 1.
\end{align*}
Assume that $|Du|\le \gamma$, then for any $j\ge 0$,
\begin{align}
\|{\bf A}\|_{H^{j,2}}\le C_{\gamma} \sum^{j+1}_{m=1}\|D^2u\|^{m}_{W^{j,2}}\label{xv35}.
\end{align}
If $\|Du\|_{L^{\infty}}\le \gamma_1$,  $\|D^2u\|_{L^{\infty}}\le \gamma_2$, then for any $j\ge 0$,
\begin{align}\label{xv36}
\|D^2u\|_{W^{j,2}}\le C_{\gamma_1,\gamma_2}( \sum^{j}_{i=1} \|{\bf A}\|^{i}_{H^{j,2}}+\sum^{j}_{i=2} \|D^2u\|^{i}_{W^{j,2}}).
\end{align}
Let $F(t,x)$ be a  smooth solution of SMCF. Then one has
\begin{align}\label{xv37}
\frac{d}{dt}\int_{\Sigma }|\nabla^{l} {\bf A} |^2_{g}d\mu\le C \max_{\Sigma }|{\bf A}|^2_{g} \int_{\Sigma}|\nabla^{l} {\bf A} |^2_{g}d\mu.
\end{align}
\end{lemma}

Our previous work \cite{Li} has studied local Cauchy problem for graph solutions. We restate the results as follows for reader's convenience.
\begin{lemma}\label{1xx1xxHxxjxxK}
Given $n\ge 2$, let $s\in \Bbb R$, $s>\frac{n+5}{2}$. There exists a small  constant $\varepsilon>0$   depending only on $n,s$ such that if
      $$\|u^1_0\|_{H^{s}_x}+\|u^2_0\|_{H^{s}_x} \le \varepsilon,$$
then there exists a unique  local solution $F$ to SMCF  of the form
\begin{align}\label{Fxxgxxm}
F(t,x)=(x_1,...,x_n,u_1(t,x),u_2(t,x)),\mbox{  }(u_1,u_2)\upharpoonright_{t=0}=(u^0_1,u^0_2),
\end{align}
such that $u_1,u_2\in C([-1,1];H^{s})$. Moreover, the continuous dependence of solutions on initial data in $H^s$  and the regularity persistence hold as well.
\end{lemma}

The following lemma states a conditional global regularity.
\begin{lemma}\label{KK}
Let $n\ge 2$. Assume that $ u^1_0(x),u^2_0(x)\in H^{\infty}$. Then the solution of SMCF of the form (\ref{Fxxgxxm}) with initial data $u^1_0,u^2_0$ belongs to $C([0,T];H^{\sigma}_{E})$ for all $\sigma\ge 0$ as long as
\begin{align*}
\sup_{t\in [0,T)}\|{\bf A}(t)\|_{H^{0,\infty}}<\infty.
\end{align*}
\end{lemma}
\begin{proof}
By  Lemma 3.9 of \cite{Li},  there exists a $T'>0$ depending only on $\|u_0\|_{H^{[\frac{n}{2}]+3}}$ such that $u_0$ evolves to a unique solution of SMCF fulfilling   $u\in C([0,T'];H^{\infty})$ . Moreover, Corollary 3.8 of \cite{Li} implies that  if $u$ fails to belong to  $C([0,T_*];H^{\infty})$ for some $T_*>0$ then $\lim_{\tau\to T_*}\sup_{t\in [0,\tau)}\|{\bf A}(t)\|_{H^{0,\infty}}=\infty$. Combining these two facts proves  our lemma.
\end{proof}

 \section{Proof of main theorem in $n\ge 3$}

The following is the Strichartz estimates for linear Schr\"odinger equations.

\begin{lemma}(c.f. \cite{MT})
Let $n\ge 1$. We call $(a,b)$ an $L^2$ admissible pair if
\begin{align}
 \frac{2}{a}+\frac{n}{b}=\frac{n}{2}, \mbox{ }2\le a,b\le \infty, \mbox{ }(a,b,n) \neq (2,\infty,2).
\end{align}
Assume that $v$ solves
\begin{align}
\left\{
  \begin{array}{ll}
    (i\partial_t +\Delta)v=G, & \hbox{ } \\
    v(0,x)=v_0(x). & \hbox{ }
  \end{array}
\right.
\end{align}
Let $(a,b)$ and $(m,z)$ be any $L^2$ admissible pairs, and denote $m',z'$ the conjugate of $m$ and $z$ respectively. Then there exists  a constant $C>0$  depending only on $a,b,n,m,z$ such that
\begin{align*}
\|v\|_{L^{a}_tL^b_x(\Bbb J\times \Bbb R^n)}\le C( \|v_0\|_{L^2_x}+\|G\|_{L^{m'}_tL^{z'}_x(\Bbb J\times\Bbb R^n)})
\end{align*}
for any interval $\Bbb J$ containing $0$.
\end{lemma}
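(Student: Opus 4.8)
The plan is to prove this along the classical route for Strichartz estimates: dispersive estimate $\Rightarrow$ $TT^*$ argument for the homogeneous estimate $\Rightarrow$ Christ--Kiselev lemma (or, at the endpoint, the Keel--Tao argument) for the retarded inhomogeneous estimate. First I would record the two basic bounds for the free propagator $U(t):=e^{\mathrm{i}t\Delta}$: the $L^2$ isometry $\|U(t)f\|_{L^2_x}=\|f\|_{L^2_x}$, immediate from Plancherel since $\widehat{U(t)f}(\eta)=e^{-\mathrm{i}t|\eta|^2}\widehat f(\eta)$, and the dispersive bound $\|U(t)f\|_{L^\infty_x}\lesssim |t|^{-n/2}\|f\|_{L^1_x}$, from the explicit kernel $U(t)f=(4\pi\mathrm{i}t)^{-n/2}\int e^{\mathrm{i}|x-y|^2/4t}f(y)\,dy$. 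Riesz--Thorin interpolation between these gives $\|U(t)f\|_{L^b_x}\lesssim |t|^{-n(1/2-1/b)}\|f\|_{L^{b'}_x}$ for every $2\le b\le\infty$.

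Next I would establish the homogeneous estimate $\|U(t)v_0\|_{L^a_tL^b_x(\Bbb J\times\Bbb R^n)}\lesssim\|v_0\|_{L^2_x}$ for an admissible pair $(a,b)$. By the $TT^*$ method, with $Tf(t)=U(t)f$, this is equivalent to the boundedness of $T^*TG=\int_{\Bbb R}U(t-s)G(s)\,ds$ from $L^{a'}_tL^{b'}_x$ to $L^a_tL^b_x$. Inserting the interpolated dispersive bound inside the integral, then applying Minkowski's inequality and the Hardy--Littlewood--Sobolev inequality in the time variable — using precisely the scaling relation $2/a+n/b=n/2$, i.e.\ $n(1/2-1/b)=2/a$, so that the HLS exponents match — closes this for $a>2$. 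Dualizing yields the companion bound $\big\|\int_{\Bbb R}U(-s)G(s)\,ds\big\|_{L^2_x}\lesssim\|G\|_{L^{m'}_tL^{z'}_x}$ for any admissible $(m,z)$.

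For the full inhomogeneous statement I would combine the homogeneous estimate for $(a,b)$ with the dual homogeneous estimate for $(m,z)$ to bound $\big\|\int_{\Bbb R}U(t-s)G(s)\,ds\big\|_{L^a_tL^b_x}$, and then invoke the Christ--Kiselev lemma to replace the full-line integral by the retarded one $\int_0^t U(t-s)G(s)\,ds = v(t)-U(t)v_0$; this is legitimate exactly because $a>m'$ in the non-endpoint regime. Writing $v=U(t)v_0+\int_0^t U(t-s)G(s)\,ds$ and adding the homogeneous estimate for the first term finishes the proof away from endpoints.

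The main obstacle is the endpoint pair, which for $n\ge 3$ is $(a,b)=(2,\tfrac{2n}{n-2})$ and is \emph{included} in the admissible range here (only $(2,\infty,2)$ is excluded). At that endpoint the Christ--Kiselev lemma fails since $a=2$, and the HLS step degenerates into a logarithmically divergent convolution, so the soft argument above breaks down; instead one must run the Keel--Tao bilinear argument — dyadically decompose $U(t-s)$ according to $|t-s|$, estimate each dyadic piece by interpolating the $L^2\to L^2$ and $L^1\to L^\infty$ bounds as a bilinear form, and sum over scales via a Schur-type real-interpolation trick. I would cite Keel--Tao for this endpoint piece and Strichartz / Ginibre--Velo together with the Christ--Kiselev lemma for the non-endpoint pairs, rather than reproducing the arguments in full.
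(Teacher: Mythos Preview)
Your proof is correct and follows the standard classical route (dispersive estimate, $TT^*$ and Hardy--Littlewood--Sobolev for the non-endpoint range, Keel--Tao for the endpoint), and you are right to flag that the endpoint $(2,\tfrac{2n}{n-2})$ requires the bilinear Keel--Tao argument rather than Christ--Kiselev. There is nothing to compare against, however: the paper does not prove this lemma at all --- it is simply stated as the well-known Strichartz estimates for the linear Schr\"odinger equation and used as a black box in Section~4, so your write-up in fact supplies more than the paper does.
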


The following lemma provides  some elementary arithmetic facts  we need below.
\begin{lemma}\label{poikkm}
Let $n\in\Bbb Z_+$, $n\ge 2$.  If $n$ is even, then
\begin{align}
|\partial^{\frac{n}{2}+1}_x \mathcal{R}|\le C_{n}\sum_{i_1+...+i_m\le \frac{n}{2}+1,4\le m\le 2^{10n} }|\partial^{i_1}_x\partial^2_{x}\phi^{\pm}||\partial^{i_2}_x\partial_x\phi^{\pm}|...|\partial^{i_m}_x\partial_x\phi^{\pm}|\label{Jnkm}\\
|\partial^{\frac{n}{2}+1}_x (\partial^2_x\phi^\pm\partial_x \phi^\pm\partial_x \phi^\pm)|\le C_{n}\sum_{i'_1+...+i'_m\le \frac{n}{2}+1,3\le m\le 4n }|\partial^{i'_1}_x\partial^2_{x}\phi^{\pm}||\partial^{i'_2}_x\partial_x\phi^{\pm}|...|\partial^{i'_m}_x\partial_x\phi^{\pm}|.\label{2Jnkm}
\end{align}
For each term in the sum of  (\ref{Jnkm}) there exist  at least two $i_l$ in $\{i_l\}^{m}_{l=1}$  such that $1\le i_{l}\le \frac{n}{4}+2$. And  for each term in the sum of  (\ref{2Jnkm}) there exist at least two   $i'_l$ in $\{i'_l\}^{m}_{l=1}$  such that $1\le i'_{l}\le \frac{n}{4}+2$.

If $n$ is odd, then
\begin{align}
|\partial^{\frac{n+1}{2}+1}_x \mathcal{R}|\le C_{n}\sum_{i_1+...+i_m\le \frac{n+1}{2}+1,4\le m\le 2^{10n} }|\partial^{i_1}_x\partial^2_{x}\phi^{\pm}||\partial^{i_2}_x\partial_x\phi^{\pm}|...|\partial^{i_m}_x\partial_x\phi^{\pm}|\label{hJnkm}\\
|\partial^{\frac{n+1}{2}+1}_x (\partial^2_x\phi^\pm\partial_x \phi^\pm\partial_x \phi^\pm)|\le C_{n}\sum_{i'_1+...+i'_m\le \frac{n+1}{2}+1,3\le m\le 4n }|\partial^{i'_1}_x\partial^2_{x}\phi^{\pm}||\partial^{i'_2}_x\partial_x\phi^{\pm}|...|\partial^{i'_m}_x\partial_x\phi^{\pm}|.\label{h2Jnkm}
\end{align}
For each term in the sum of  (\ref{hJnkm}) there exist  at least two $i_l$ in $\{i_l\}^{m}_{l=1}$ so that $1\le i_l\le \frac{n+1}{4}+2$. And  for each term in the sum of  (\ref{h2Jnkm}) there exist at least  two $i'_l$ in $\{i'_l\}^{m}_{l=1}$  such that $1\le i'_{l}\le \frac{n+1}{4}+2$.
\end{lemma}
\begin{proof}
This follows by the Leibnitz rule  and direct calculations.
\end{proof}

\subsection{Proof of global regularity}
By Duhamel principle,
the solution of (\ref{mGGsGGs}) can be expressed by
\begin{align*}
 \phi(t)=e^{\mathrm{i}t\Delta}\phi_0-i\int^{t}_{0}e^{\mathrm{i}(t-s)\Delta} G(\partial^2_x\phi^{\pm},\partial_x\phi^{\pm})(s)ds.
\end{align*}
Then Strichartz estimates give
\begin{align}\label{1ma}
 \|\phi(t)\|_{L^{a}_tW^{l,b}_x}\lesssim  \|\phi_0\|_{H^{l}_x}+
 \|G(\partial^2_x\phi^{\pm},\partial_x\phi^{\pm})\|_{L^{2}_tW^{l,\frac{2n}{n+2}}_x}
\end{align}
for any $L^2$ admissible pair  $(a,b)$ and any $l\ge 0$.

\subsubsection{ $n\ge 4$ is even.}
Let's consider the case when $n\ge 4$ and is even.

{\it Step 1. Setup of bootstrap.}
Given $T\ge0$, define
\begin{align*}
 \|\phi\|_{ {X}_T}=&\|\phi\|_{L^{\infty}_tW^{\frac{n}{2}+1,2}_x ([-T,T]\times\Bbb R^n)}+\|\phi\|_{L^{2}_tW^{\frac{n}{2}+1 ,\frac{2n}{n-2}}_x ([-T,T]\times\Bbb R^n)}\\
 \|\phi\|_{ \mathcal{{X}}_T}=&\|\phi\|_{ {X}_T}+\| \phi \|_{L^{\infty}_tH^{k}_x([-T,T]\times\Bbb R^n)}.
\end{align*}
Let $\mathcal{T}$ be the maximal time such that
\begin{align}\label{1xxmxxaxxbxx1}
\sup_{T\in[0,\mathcal{T}]} \| \phi \|_{\mathcal{X}_{T}} \le C_0\epsilon.
\end{align}
By local theorem in Lemma \ref{1xx1xxHxxjxxK} and Sobolev embedding, one has  $\mathcal{T}>0$ for some $C_0>0$.

{\it Step  2. Leading nonlinear term.}
Letting $l=\frac{n}{2}+1$ in (\ref{1ma}),
from  Lemma \ref{poikkm} we obtain by H\"older inequality and Sobolev embeddings that
\begin{align}
&\| {\partial^2_x\phi }|\partial_x\phi|^2 \|_{L^{2}_tW^{l,\frac{2n}{n+2}}_x}\nonumber\\
&\lesssim\sum_{3\le m\le 4l} \sum_{1\le  j_3,...,j_m\le \frac{n}{2}+3}
\sum_{1\le j_1,j_2\le \frac{n}{4}+2}\|\partial^{j_1}_x\phi\|_{L^{2}_tL^{4}_x}\|\partial^{j_2}_x \phi\|_{L^{\infty}_tL^{\frac{4n}{n+4}}_x}\|\cdot\|_{L^{\infty}_{t,x}}....\|\partial^{j_m}_x \phi\|_{L^{\infty}_{t,x}}\nonumber\\
&\lesssim \sum_{3\le m\le 4l} \sum_{1\le  j_3,...,j_m\le \frac{n}{2}+3}
\sum_{1\le j_1,j_2\le \frac{n}{4}+2}\|\phi\|_{L^{2}_tW^{j_1+\frac{n}{4}-1 ,\frac{2n}{n-2}}_x}\| \phi\|_{L^{\infty}_tW^{j_2+\frac{n}{4}-1,2}_x}\|\cdot\|_{L^{\infty}_{t,x}}... \|\partial^{j_{m}}_x\phi\|_{L^{\infty}_{t,x}}.\label{mmvbMmm}
\end{align}
We have several remarks for (\ref{mmvbMmm}):
Since $j_3,...,j_m\le \frac{n}{2}+3$, $j_1,j_2\le \frac{n}{4}+2$, there holds
\begin{align*}
\max_{i=3,...,m}|j_i+ \frac{n}{4}-1|\le n+2,\mbox{ }\max_{i=1,2}|j_i+\frac{n}{4}-1 |\le \frac{n}{2}+1,
\end{align*}
which enables us to  apply $k\ge n+4$ to dominate $\| \phi\|_{L^{\infty}_tW^{j_2+\frac{n}{4}-1,2}_x}, \|\partial^{j_{i}}_x\phi\|_{L^{\infty}_{t,x}}, i=3,...,m$, by $\|\phi\|_{L^{\infty}_tH^{k}_x}$, and to absorb $\|\phi\|_{L^{2}_tW^{j_1+\frac{n}{4}-1 ,\frac{2n}{n-2}}_x}$ into $\|\phi\|_{\mathcal{X}_{T}}$.
Thus we have proved
\begin{align*}
\|| {\partial^2_x\phi }||\partial_x\phi|^2  \|_{L^{2}_tW^{\frac{n}{2}+1,\frac{2n}{n+2}}_x([-T,T]\times\Bbb R^n)}\lesssim  \|\phi\|^{3}_{\mathcal{X}_{T}}.
\end{align*}

{\it Step 3. Remainder nonlinear terms.} Similarly for $l=\frac{n}{2}+1$ one obtains from  Lemma \ref{poikkm} that
\begin{align*}
&\| \mathcal{R}  \|_{L^{2}_tW^{l,\frac{2n}{n+2}}_x}\nonumber\\
&\lesssim\sum_{4\le m\le 2^{10l}}\sum_{1\le i_1,i_2\le \frac{n}{4}+2} \sum_{1\le i_3,...,i_{m}\le \frac{n}{2}+3}\|\partial^{i_1}_x \phi\|_{L^{\infty}_tL^{\frac{4n}{n+4}}_x}\|\partial^{i_2}_x\phi\|_{L^{2}_tL^{4}_x}\|\partial^{i_3}_x\phi\|_{L^{\infty}_{t,x}}...\|\partial^{i_m}_x\phi\|_{L^{\infty}_{t,x}}
\nonumber\\
&\lesssim\sum_{4\le m\le 2^{10l}}\sum_{1\le i_1,i_2\le \frac{n}{4}+2} \sum_{1\le i_3,...,i_{m}\le \frac{n}{2}+3}\| \phi\|_{L^{\infty}_tW^{i_1+\frac{n}{4}-1,2}_x}\|\phi\|_{L^{2}_tW^{i_2+\frac{n}{4}-1 ,\frac{2n}{n-2}}_x}\|\phi\|^{m-2}_{L^{\infty}_tH^{k}_x}.
\end{align*}
Therefore,
 \begin{align*}
\|\mathcal{R} \|_{L^{2}_tW^{\frac{n}{2}+1,\frac{2n}{n+2}}_x([-T,T]\times\Bbb R^n)}\lesssim  \sum^{C_{n}}_{j\ge 4}\|\phi\|^{j}_{\mathcal{X}_T}.
\end{align*}

{\it Step 4. Close bootstrap of $X_T$ norm.}
Now, by (\ref{1ma}) we summarize   Step 2 and Step 3 as
\begin{align*}
\|\phi\|_{{X}_T}\lesssim \|\phi_0\|_{H^k_x}+\sum^{C_n}_{l=3}\|\phi \|^l_{\mathcal{{X}}_T}.
\end{align*}
Thus for $T\in[0,\mathcal{T}]$ one has
\begin{align}\label{hbnmm}
\|\phi\|_{{X}_T}\le C_n \|\phi_0\|_{H^k_x}+C^3_n(C_0\epsilon)^3.
\end{align}

 {\it Step 5. Close bootstrap of $H^k$ norm.}

By Sobolev  inequality and the definition of $X_T$ norm, we see
 \begin{align}\label{VbbB}
\| \phi(t)\|_{L^2_tW^{2,\infty}_x([-T,T]\times\Bbb R^n)}\lesssim \|\phi\|_{L^2_tW^{\frac{n}{2}+1,\frac{2n}{n-2}}_x([-T,T]\times\Bbb R^n)}\lesssim  \|\phi\|_{X_T}.
\end{align}
Hence, for any $t\in[0,\mathcal{T}]$  we obtain
\begin{align}\label{jknml090}
\int^{t}_0\| \phi(t)\|^2_{W^{2,\infty}}ds\lesssim  \epsilon^2.
\end{align}
By (\ref{xv32}) and (\ref{jknml090}), for $t\in[0,\mathcal{T}]$, we thus get
\begin{align*}
\int^{t}_0\max_{\Sigma}|{\bf A}|^2_g(s)ds\lesssim \epsilon^2.
\end{align*}
Then for any $t\in[0,\mathcal{T}]$, $j\in \Bbb N$, (\ref{xv37}) and Gronwall inequality yield
\begin{align*}
\int_{\Sigma} |\nabla^{j}{\bf A}|^2_g(t)d\mu\le 2\int_{\Sigma} |\nabla^{j}{\bf A}|^2_g(0)d\mu,
\end{align*}
provided  $0<\epsilon\ll1$.
Therefore,  (\ref{xv35}) implies for any $t\in[0,\mathcal{T}]$, $0\le j\le k$,
\begin{align}\label{MxxxkxxxL}
\int_{\Sigma} |\nabla^{j}{\bf A}|^2_g(t)d\mu\lesssim \epsilon.
\end{align}
Noticing that  the RHS of (\ref{xv36}) with $j=k-2$  is quadratic in $\|D^2u\|_{W^{k-2,2}}$, and  $\|D^2u\|_{W^{k-2,2}}$ is small by bootstrap assumption,  (\ref{MxxxkxxxL}) further yields  for any $t\in [0,\mathcal{T}]$,
\begin{align}\label{HxxxnxxxbxxxgxxxM}
\|D^2u(t)\|_{W^{k-2,2}}\le C_1 \|\phi_0\|_{H^{k}}
\end{align}
for some $C_1>0$ if $0<\epsilon\ll1$.  (\ref{HxxxnxxxbxxxgxxxM}) has given admissible  bounds of $\|\phi\|_{{\dot H}^{j}}$ with $2\le j\le k$.  By the definition of $X_{T}$ and (\ref{hbnmm}),  we see
\begin{align}\label{tcccmcccamcccm}
\| \phi(t)\|_{H^2_x}\le \|\phi \|_{X_{T}}\le C_n (C_0\epsilon)^2+C_n\|\phi_0\|_{H^k}.
\end{align}
Hence, (\ref{HxxxnxxxbxxxgxxxM}) and  (\ref{tcccmcccamcccm}) yield
\begin{align*}
\|\phi\|_{L^{\infty}_t H^k_x}\le C_n (C_0\epsilon)^2+C_n\|\phi_0\|_{H^k}.
\end{align*}

 {\it Step 6. Close bootstrap.}
As a summary of Step 4 and Step 5, we have obtained
\begin{align}\label{HxnxbxM}
\|u(t)\|_{\mathcal{X}_T}\le C_n (C_0\epsilon)^2+C_n\|\phi_0\|_{H^k},
\end{align}
for some $C_n>0$ and any $T\in [0,\mathcal{T}]$. (The inverse direction $t\in[-\mathcal{T},0]$ follows by a time reflection and defining $J(F)$ as the opposite direction rotation.)
Set $C_0$ to fulfill
\begin{align*}
C_0>10+4C_n,
\end{align*}
then choose $\epsilon>0$ to be sufficiently small such that
\begin{align*}
C_nC^2_0\epsilon\le \frac{1}{100}.
\end{align*}
By (\ref{HxnxbxM}) and bootstrap, we have proved (\ref{1xxmxxaxxbxx1}) holds with $C_0\epsilon$ replaced by $\frac{1}{2}C_0\epsilon$.

\subsubsection{ $n\ge 3$ is odd.}
Let's consider the case when $n\ge 3$ and is odd.

{\it Step 1'. Setup of bootstrap.}
Given $T\ge0$, define
\begin{align*}
 \|\phi\|_{ {X}_T}&=  \|\phi\|_{L^{\infty}_tW^{\frac{n+1}{2}+1,2}_x([-T,T]\times\Bbb R^n)}+\|\phi\|_{L^{2}_tW^{\frac{n+1}{2}+1 ,\frac{2n}{n-2}}_x([-T,T]\times\Bbb R^n)}\\
\|\phi\|_{ \mathcal{{X}}_T}&=\|\phi\|_{ {X}_T}+\| \phi \|_{L^{\infty}_tH^{k}_x([-T,T]\times\Bbb R^n)}.
\end{align*}
Let $\bf{T}$ be the maximal time such that
\begin{align*}
\sup_{T\in[0,\bf{T}]} \| \phi(t)\|_{\mathcal{X}_{T}} \le C_0\epsilon.
\end{align*}

{\it Step 2'. Leading term.}
Take $l=\frac{n+1}{2}+1$ in (\ref{1ma}).
Then from  Lemma \ref{poikkm} we obtain by H\"older inequality and Sobolev embeddings that
\begin{align}
&\| {\partial^2_x\phi }|\partial_x\phi|^2\|_{L^{2}_tW^{l,\frac{2n}{n+2}}_x}\\
&\lesssim \sum_{3\le m\le 4l} \sum_{1\le j_3,...,j_m\le \frac{n+1}{2}+2}\sum_{1\le j_1,j_2 \le \frac{n+1}{4}+2}\|\partial^{j_1}_x\phi\|_{L^{2}_tL^{\frac{8n}{n+1}}_x}\|\partial^{j_2}_x \phi\|_{L^{\infty}_tL^{\frac{4n}{n+3}}_x}\|\partial^{j_3}_x\phi\|_{L^{\infty}_tL^{\frac{8n}{n+1}}_x}\|\cdot\|_{L^{\infty}_{t,x}}...\|\partial^{j_m}_x\phi\|_{L^{\infty}_{t,x}}\nonumber\\
&\lesssim
\sum_{3\le m\le 4l} \sum_{1\le j_1,j_2 \le \frac{n+1}{4}+2}\| \phi\|_{L^{\infty}_tW^{j_1+\frac{n+1}{4}-1,2}_x}\|\phi\|_{L^{2}_tW^{j_2+\frac{n+1}{4}-1 ,\frac{2n}{n-2}}_x} \|\phi\|^{m-2}_{L^{\infty}_tH^{k}_x}.\label{vbMmm}
\end{align}
 Since $j_1,j_2\le \frac{n+1}{4}+2$, there holds
\begin{align*}
\max_{i=1,2}|j_i+ \frac{n+1}{4}-1|\le \frac{n+1}{2}+1,
\end{align*}
which enables us to  dominate $\|\phi\|_{L^{2}_tW^{j_2+\frac{n+1}{4}-1 ,\frac{2n}{n-2}}_x} $ by $\|\phi\|_{L^{2}_tW^{\frac{n+3}{2},\frac{2n}{n-2}}_x}$.
Thus we obtain that
\begin{align*}
&\| {\partial^2_x\phi }|\partial_x\phi|^2 \|_{L^{2}_tW^{\frac{n+1}{2}+1,\frac{2n}{n+2}}_x}\lesssim \|\phi\|^2_{ \mathcal{{X}}_{T}}\|\phi\|_{L^{\infty}_tH^{k}_x}.
\end{align*}

{\it Step 3'. Remainder nonlinear terms.} Similarly we get
 \begin{align*}
\|\mathcal{R} \|_{L^{2}_tW^{\frac{n+1}{2}+1,\frac{2n}{n+2}}_x}\lesssim  \sum^{C_{n}}_{l\ge 4}\|\phi\|^{l}_{\mathcal{X}_{T}}.
\end{align*}

{\it Step 4' to Step 5' Close Bootstrap.} The rest steps are the same as even $n$. Notice that for odd $n\ge 3$ the key estimate  (\ref{VbbB}) shall be replaced by
 \begin{align*}
\| \phi(t)\|_{L^2_tW^{2,\infty}_x([-T,T]\times\Bbb R^n)}\lesssim \|\phi\|_{L^2_tW^{\frac{n+1}{2}+1,\frac{2n}{n-2}}_x([-T,T]\times\Bbb R^n)}\lesssim  \|\phi\|_{X_T}.
\end{align*}
 Thus ${\bf T}=\infty$.

\subsection{Long time dynamics}

(\ref{scatter}) is  indeed a modified  scattering type result.

\begin{lemma}
Let $k_n= \frac{n}{2}+1$ for even $n$ and  $k_n=\frac{n+1}{2}+1 $  for odd $n$.
There exist $\phi_{\pm}\in H^{k_n}$ such that
\begin{align}\label{vvvva1}
\lim_{t\to\pm\infty}\| \phi-e^{ it\Delta}\phi_{\pm}\|_{H^{k_n}_x}=0.
\end{align}
\end{lemma}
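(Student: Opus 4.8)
The plan is to derive the scattering statement directly from the global bounds already established in Section~4.1, via the standard Cauchy-criterion argument for scattering of a Duhamel solution. Recall that the global regularity proof shows $\|\phi\|_{\mathcal{X}_\infty}\lesssim\epsilon$, so in particular the Strichartz norm $\|\phi\|_{L^2_tW^{k_n,\frac{2n}{n-2}}_x(\Bbb R\times\Bbb R^n)}$ is finite, as is $\|\phi\|_{L^\infty_tH^k_x}$ with $k\ge n+4$. Writing the Duhamel formula and setting $\phi_{\pm}:=\phi_0+i\int_0^{\pm\infty}e^{-is\Delta}\,O(\partial^2_x\phi|\partial_x\phi|^2)(s)\,ds$, we have
\begin{align*}
\phi(t)-e^{it\Delta}\phi_{\pm}=-i\int_t^{\pm\infty}e^{i(t-s)\Delta}\,O(\partial^2_x\phi|\partial_x\phi|^2)(s)\,ds,
\end{align*}
so by the inhomogeneous Strichartz estimate of Lemma~4.3 (with the endpoint replaced as usual when $n=2$, though here $n\ge3$),
\begin{align*}
\|\phi(t)-e^{it\Delta}\phi_{\pm}\|_{H^{k_n}_x}\lesssim \|O(\partial^2_x\phi|\partial_x\phi|^2)\|_{L^2_sW^{k_n,\frac{2n}{n+2}}_x([t,\pm\infty)\times\Bbb R^n)}.
\end{align*}

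The key step is then to show that the right-hand side tends to $0$ as $t\to\pm\infty$; once this is done, the same bound applied with $[t,\pm\infty)$ replaced by $[t,t']$ shows $\{e^{-it\Delta}\phi(t)\}$ is Cauchy in $H^{k_n}$, hence $\phi_{\pm}\in H^{k_n}$ is well defined and \eqref{vvvva1} follows. To prove the right-hand side vanishes, I would reuse verbatim the multilinear estimate from Step~2/Step~2' of the global regularity proof: there it is shown, via Lemma~\ref{poikkm}, H\"older, and Sobolev embedding, that the $L^2_tW^{k_n,\frac{2n}{n+2}}_x$ norm of the cubic term over any time interval $\Bbb J$ is bounded by a product of the form
\begin{align*}
\|\phi\|_{L^\infty_tW^{a,2}_x(\Bbb J)}\,\|\phi\|_{L^2_tW^{b,\frac{2n}{n-2}}_x(\Bbb J)}\,\|\phi\|_{L^\infty_tH^k_x(\Bbb J)},
\end{align*}
with analogous control of the remainder $\mathcal{R}$ using \eqref{mGGq}. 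Choosing $\Bbb J=[t,\pm\infty)$, the factor $\|\phi\|_{L^2_sW^{b,\frac{2n}{n-2}}_x([t,\pm\infty))}\to0$ as $t\to\pm\infty$ by dominated convergence, since the full-line norm is finite; the other two factors stay bounded by $C\epsilon$. Hence each cubic and higher-order piece of $O(\partial^2_x\phi|\partial_x\phi|^2)$ on $[t,\pm\infty)$ is $o(1)$, and summing the finitely many terms gives the claim.

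I do not anticipate a genuine obstacle: the statement is essentially a bookkeeping corollary of the bounds already in hand, and the only mild care needed is (i) checking that the $\mathcal{X}_T$ bounds, which were stated with a time cutoff $[-T,T]$, do pass to $T=\infty$ (they do, since the bootstrap constant is uniform in $T$), and (ii) verifying that the chosen Strichartz exponents $\big(\text{namely } (a,b)=(2,\tfrac{2n}{n-2})\text{ on the left and the dual }(2,\tfrac{2n}{n+2})\text{ on the right}\big)$ are $L^2$-admissible for $n\ge3$, which they are. If one wanted to be scrupulous, the mild subtlety is that the remainder estimate \eqref{mGGq} is only valid once $\|\partial_x\phi\|_{L^\infty_{t,x}}$ is small, but this is guaranteed by $k\ge n+4$ and the global $H^k$ bound together with Sobolev embedding, exactly as in Section~4.1. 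The argument is identical for $t\to+\infty$ and $t\to-\infty$, the latter using the time-reflection symmetry of SMCF noted in Step~6.
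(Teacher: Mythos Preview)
Your proposal is correct and follows essentially the same route as the paper: the paper's proof simply asserts that scattering is standard once one knows $\|O(\partial^2_x\phi|\partial_x\phi|^2)\|_{L^2_tW^{k_n,\frac{2n}{n+2}}_x(\Bbb R\times\Bbb R^n)}<\infty$, and refers back to Section~4.1 for that bound, while you spell out the Duhamel/Cauchy-criterion details explicitly. One small correction: the Strichartz estimate you invoke is the first lemma of Section~4 (not ``Lemma~4.3''), but the argument is unaffected.
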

\begin{proof}
It is standard to deduce (\ref{vvvva1}) from
\begin{align}\label{cvbnmnbvc}
\| G({\partial^2_x\phi^{\pm} }, \partial_x\phi^{\pm}) \|_{L^{2}_tW^{k_n,\frac{2n}{n+2}}_x} \lesssim  1.
\end{align}
(\ref{cvbnmnbvc}) has been proved in Section 4.1. So our lemma follows.
\end{proof}

\subsection{From graph solution to the original solution}

 Lemma \ref{vcfghfghb}
 has shown under the orthogonal transformation ${\Bbb B}$ and coordinate transform $\Phi$, it suffices to consider graph solutions. Transforming (\ref{vvvva1}) back to the original coordinates gives (\ref{scatter}) in Theorem 1.1. Furthermore, if $F_0\in H^{\infty}_{E}$,  by  Lemma   \ref{KK}, to prove $F(t)\in C([-T,T];H^{\sigma}_{E})$ for all $\sigma\ge 0$ and $T>0$, it suffices to notice that the homoeomorphism $\Phi$ in Lemma 2.2 now is a diffeomorphism and there holds
 \begin{align*}
 \sup_{t\in \Bbb R}\|{\bf A}(t)\|_{H^{0,\infty}}\lesssim C(\|D\phi\|_{L^{\infty}_{t,x}})\|D^2\phi\|_{L^{\infty}_{t,x}}\lesssim \|\phi\|_{L^{\infty}_tH^{k}_x}\lesssim 1.
 \end{align*}

\section{Existence of wave operators}

In this section, we always assume $F$ is a graph solution, i.e. $F(x,t)=(x,u_1(x,t),u_2(x,t))$.
Recall that $(u_1,u_2)$ satisfies (\ref{sGGmGGkGG3}), and
$\phi= u_1 + \mathrm{{i}} u_2$ fulfills  (\ref{mGGsGGs}), i.e.
\begin{align}\label{xxmGGsGGs}
 \mathrm{i}\partial_t \phi+ \Delta \phi= G(\partial^2_x\phi^{\pm},\partial_x{\phi}^{\pm}),
\end{align}
where   $G$ is an explicit function of $\partial^2_x\phi^{+},\partial^2_x\phi^{-},\partial_x\phi^{+},\partial_x\phi^{-}$.

Let $\phi_{\infty}\in H^{k}$ be a given function with $\|\phi_{\infty}\|_{H^{k}}\le \varepsilon_*\ll 1$.
Given $L>0$, consider the equation
\begin{align}\label{xmGGsGGs}
 \phi_{L}(t)=e^{it \Delta } \phi_{\infty}-i\int^{L}_{t}e^{i(t-s)\Delta} G(\partial^2_x\phi^{\pm}_{L}(s),\partial_x{\phi}^{\pm}_{L}(s))ds.
\end{align}
Observe from  (\ref{xmGGsGGs}) that
\begin{align}
 \phi_{L}(L)&=e^{i \Delta L} \phi_{\infty}\\
  \mathrm{i}\partial_t \phi_{L}+ \Delta \phi_{L}&= G(\partial^2_x\phi^{\pm}_{L},\partial_x{\phi}^{\pm}_{L})\label{78jnjll}.
\end{align}
Then define $F_{L}(t):\Bbb R^n\to \Bbb R^{n+2}$ to be
\begin{align*}
 F_{L}(x,t)&=(x,\Re \phi_{L}(x,t),\Im \phi_{L}(x,t)).
\end{align*}
We see $F_{L}(t)$ now solves SMCF. By Section 4, we find the initial data $F_{L}(L)$  which satisfies
\begin{align*}
&F_{L}(L)=(x,\Re \phi_{L}(x,L),\Im \phi_{L}(x,L))= (x,\Re e^{i L \Delta} \phi_{\infty},\Im e^{i L \Delta} \phi_{\infty})\\
&\|F_{L}(L)-(x,0,0)\|_{H^{k}_x}\le \varepsilon_*\ll 1,
\end{align*}
evolves to a global solution $F_{L}(t)$ of SMCF for $t\in \Bbb R$  satisfying
\begin{align}\label{67000}
\|F_{L}(t)-(x,0,0)\|_{L^{\infty}_tH^{k}_x\cap {L^{2}_tW^{k_n,\frac{2n}{n-2}}_x}}+\|\partial_tF_{L}(t)\|_{L^{\infty}_tH^{k-2}_x\cap {L^{2}_tW^{k_n-2,\frac{2n}{n-2}}_x}}\le C\varepsilon_*,
\end{align}
for some universal constant $C>0$ depending only on $n$.
Moreover, applying  arguments of Section 4 to (\ref{xmGGsGGs}), we have the following spacetime bounds:
\begin{align*}
\|\phi\|_{L^{2}_tW^{k_n,\frac{2n}{n-2}}_x([L,\infty)\times \Bbb R^n)} &\le C \|e^{it\Delta}\phi_{\infty}\|_{_{L^{2}_tW^{k_n,\frac{2n}{n-2}}_x([L,\infty)\times \Bbb R^n)}}+C\varepsilon^2_*\|\phi\|_{L^{2}_tW^{k_n,\frac{2n}{n-2}}_x([L,\infty)\times \Bbb R^n )}  \\
\|\phi\|_{L^{2}_tW^{k_n,\frac{2n}{n-2}}_x([M,L)\times \Bbb R^n)} &\le C \|e^{it\Delta}\phi_{\infty}\|_{_{L^{2}_tW^{k_n,\frac{2n}{n-2}}_x([M,L)\times \Bbb R^n)}}+ C\varepsilon^2_*\|\phi\|_{L^{2}_tW^{k_n,\frac{2n}{n-2}}_x([M,L)\times \Bbb R^n )} , \forall M<L,
\end{align*}
 for some universal constant $C>0$ depending only on $n$. Thus for $\varepsilon_*$ sufficiently small  one has
 \begin{align}\label{667000}
\|\phi\|_{L^{2}_tW^{k_n,\frac{2n}{n-2}}_x[M,\infty)\times \Bbb R^n} \le C\|e^{it\Delta}\phi_{\infty}\|_{_{L^{2}_tW^{k_n,\frac{2n}{n-2}}_x[M,\infty)\times \Bbb R^n}}, \mbox{ }\forall M\in \Bbb R.
\end{align}
By  (\ref{67000}) there exists a sequence $L_j\to \infty$ such that $F_{L_j}(t)$ converges weakly in $L^{\infty}_tH^{k}_x\cap {L^{2}_tW^{k_n,\frac{2n}{n+2}}_x}$ to $F^{\flat}(t)$. By compact Sobolev embedding, $F_{L_j}(t)$ converges to $F^{\flat}(t)$ strongly in localization of $C^{0}_tH^{k-1}_x$. Since $k\ge n+4$,  $F^{\flat}(t)$ solves SMCF with regularity smoother than $C^2$. Let $\varrho\in C^{\infty}_c( \Bbb R^n)$ be a test function, then (\ref{xmGGsGGs}) implies
\begin{align}
\langle \phi_{L}(t),\varrho\rangle_{L^2_x} &=\langle e^{i t \Delta} \phi_{\infty},\varrho\rangle_{L^2_x}-i\langle \int^{L}_{t}e^{i(t-s)\Delta} G(\partial^2_x\phi^{\pm}_{L}(s),\partial_x{\phi}^{\pm}_{L}(s))ds,\varrho\rangle_{L^2_x}\label{1hoo89}\\
&=\langle e^{i t \Delta} \phi_{\infty},\varrho\rangle_{L^2_x}-i\langle \int^{M}_{t} G(\partial^2_x\phi^{\pm}_{L}(s),\partial_x{\phi}^{\pm}_{L}(s)) ,e^{-i(t-s)\Delta}\varrho\rangle_{L^2_x}ds\label{2hoo89}\\
&-i\langle \int^{L}_{M}e^{i(t-s)\Delta} G(\partial^2_x\phi^{\pm}_{L}(s),\partial_x{\phi}^{\pm}_{L}(s))ds,\varrho\rangle_{L^2_x}.\label{3hoo89}
\end{align}
Note that the last line can be dominated as
\begin{align*}
&\left|\langle \int^{L}_{M}e^{i(t-s)\Delta} G(\partial^2_x\phi^{\pm}_{L}(s),\partial_x{\phi}^{\pm}_{L}(s))ds,\varrho\rangle_{L^2_x}\right|\\
&\lesssim
\|G(\partial^2_x\phi^{\pm}_{L},\partial_x{\phi}^{\pm}_{L})\|_{L^1_tL^2_x([M,L)\times\Bbb R^n)}\| \varrho\|_{L^2_x} \\
&\lesssim \| \phi_{L}\|^2_{L^{2}_tW^{k_n,\frac{2n}{n-2}}_x([M,L)\times \Bbb R^n )}C(\| \phi_{L}\|_{L^{\infty}_tH^{k}_x})\| \varrho\|_{L^2_x} \\
&\lesssim  \varepsilon_* \|e^{it\Delta}\phi_{\infty}\|^2_{_{L^{2}_tW^{k_n,\frac{2n}{n-2}}_x[M,\infty)\times \Bbb R^n}}\| \varrho\|_{L^2_x},
\end{align*}
where we applied (\ref{667000}) in the last inequality. Hence, for any $\nu>0$, there exists $M>0$ sufficiently large such that
\begin{align}\label{6cctgb}
 \sup_{L\ge M} \left|\langle \int^{L}_{M}e^{i(t-s)\Delta} G(\partial^2_x\phi^{\pm}_{L}(s),\partial_x{\phi}^{\pm}_{L}(s))ds,\varrho\rangle_{L^2_x}\right|\le \nu.
\end{align}
The second term in the RHS of (\ref{2hoo89}) splits into
\begin{align*}
 \langle \int^{M}_{t} G(\partial^2_x\phi^{\pm}_{L}(s),\partial_x{\phi}^{\pm}_{L}(s)) ,e^{-i(t-s)\Delta}\varrho\rangle_{L^2_x}ds&=\int^{M}_t\int_{|x|\le R_1}...dxds+\int^{M}_t\int_{|x|\ge R_1}...dxds\\
&:=I_1+I_2.
\end{align*}
For given $R_1>0$,   $I_2$ satisfies
\begin{align}
 |I_2|&\le \frac{1}{R_1}\int^{M}_{t} \int_{|x|\ge R_1}\|G(\partial^2_x\phi^{\pm}_{L}(s),\partial_x{\phi}^{\pm}_{L}(s))\|_{L^2_x}\|\langle x\rangle |e^{-i(t-s)\Delta}\varrho|\|_{L^2_x}ds\nonumber\\
& \le \frac{1}{R_1}\langle M-t\rangle^2\|\phi^{\pm}_{L}\|_{L^{\infty}_tH^k_x}\|\varrho\|_{H^1(\langle x\rangle dx)}\label{vvcc00}
\end{align}
where we applied Lemma \ref{Rtnnn} in the last line.
Since $F_{L_j}(t)$ converges to $F^{\flat}(t)$ strongly in localization of $L^{\infty}_tH^{k-1}_x$, for any $R_1>0,M$, $I_1$ converges to
\begin{align*}
&\langle \int^{M}_{t} \int_{|x|\le R_1} G(\partial^2_x\phi^{\flat,\pm} (s),\partial_x{\phi}^{\flat,\pm} (s)) ,e^{-i(t-s)\Delta}\varrho\rangle_{L^2_x}ds.
\end{align*}
Here, we denote $\phi^{\flat}(t)$ the associated complex valued function  with $F^{\flat}(t)$, and  $ \phi^{\flat,+}=\phi^{\flat}, \phi^{\flat,-}=\overline{\phi^{\flat}}$.

Therefore, given $t\ge 0$ and $\varrho\in C^{\infty}_c( \Bbb R^n)$, for any given sufficiently small $\nu>0$, firstly choosing $M$ sufficiently large, secondly taking $R_1>0$ large enough, for  $L\ge M$,
we infer from (\ref{1hoo89})-(\ref{3hoo89}), (\ref{vvcc00}) and  (\ref{6cctgb}) that
\begin{align*}
|\langle \phi^{\flat}(t),\varrho\rangle_{L^2_x}-\langle e^{it \Delta } \phi_{\infty},\varrho\rangle_{L^2_x}-i\langle \int^{M}_{t}e^{i(t-s)\Delta} G(\partial^2_x\phi^{\flat,\pm}(s),\partial_x{\phi}^{\flat,\pm}(s))ds,\varrho\rangle_{L^2_x}|\le \nu.
\end{align*}

Hence, we get
\begin{align*}
 \phi^{\flat}(t) =  e^{i  t\Delta} \phi_{\infty} -  i\int^{\infty}_{t}e^{i(t-s)\Delta} G(\partial^2_x\phi^{\flat,\pm}(s),\partial_x{\phi}^{\flat,\pm}(s))ds,
\end{align*}
 which together with the bounds (\ref{67000}) shows
 \begin{align*}
 \| \phi^{\flat}(t)-  e^{i  t\Delta} \phi_{\infty}\|_{L^{\infty}_tH^{k_n}_x([M,\infty)\times \Bbb R^n)}
&\lesssim   \|G(\partial^2_x\phi^{\flat,\pm}(s),\partial_x{\phi}^{\flat,\pm}(s))\|_{{L^{2}_tW^{k_n,\frac{2n}{n+2}}_x([M,\infty)\times \Bbb R^n)}}\\
&\lesssim \varepsilon^2_*  \| \phi^{ \flat}\|_{{L^{2}_tW^{k_n,\frac{2n}{n-2}}_x([M,\infty)\times \Bbb R^n)}}.
\end{align*}
 So we conclude
\begin{align*}
\lim_{t\to\infty} \| \phi^{\flat}(t)-  e^{i t \Delta} \phi_{\infty}\|_{ H^{k_n}_x}=0.
\end{align*}

 Until now, for  any given $\phi_{\infty}\in H^k$ with $\|\phi_{\infty}\|_{ H^k}\le \varepsilon_*\ll 1$, we have found an initial data $F_0$ satisfying $\|F_0-(x,0,0)\|_{H^k}\lesssim \varepsilon_*$  such that
 \begin{align*}
\lim_{t\to\infty} \| F^{n+1}(t)+\mathrm{i}F^{n+2}(t)-  e^{i t \Delta} \phi_{\infty}\|_{ H^{k_n}_x}=0.
\end{align*}
 The whole proof of Theorem 1.1 is now completed.

\section{ Supplementary materials}

\begin{lemma}\label{Rtnnn}
Let $f\in H^1(\Bbb R^n)$ with $\|f\langle x\rangle \|_{L^2_x}$. Then there holds
\begin{align*}
\|\langle x\rangle e^{it\Delta}f \|_{L^2_x}\lesssim t\|f\|_{H^{1}_x}+\|f\langle x\rangle\|_{L^2_x}.
\end{align*}
\end{lemma}
\begin{proof}
Since $\mathbb{L}_j:=(x_j+2{\mathrm{i}}t\partial_{x_j})$ commutes with $i\partial_t+\Delta$, one has by  the mass conservation law that
\begin{align*}
\|(x_j+2it\partial_{x_j})  e^{it\Delta}f \|_{L^2_x}=\|x_jf\|_{L^2}.
\end{align*}
Then the desired result follows from the fact $ \|e^{it\Delta}f \|_{H^1_x}=\|f\|_{H^1_x}$.
\end{proof}

We now recall the notion of $(r, \alpha)$-immersion originally introduced by \cite{Lan} and generalized by \cite{Bre}.  Let $\Sigma$ be an  $n$-dimensional oriented manifold. Given $q\in \Sigma$ and an immersion $F:\Sigma\to \Bbb R^{n+2}$,  let $B_q : \mathbb{R}^{n+2}\to \mathbb{R}^{n+2}$  be an Euclidean isometry which takes the origin to
$F(q)$ and maps the subspace $R^{n}\times \{0\}\subset \Bbb R^n\times \Bbb R^2$ onto the tangent bundle  $T F(\Sigma)$. Let $\pi:\Bbb R^{n+2}\to \Bbb R^n$ be the standard projection onto the first $n$ coordinates.  Define $U_{r,q}\subset \Sigma$ to be the $q$-component of the set $(\pi\circ B^{-1}_q\circ F)^{-1}(B_{r})$, where $B_{r}$ denotes the ball in $\Bbb R^n$ centered at origin of radius $r$.

  For $r,\alpha>0$, we call an immersion $F:\Sigma \to \Bbb R^{n+2}$  an $(r,\alpha)$-immersion, if for each point $q\in \Sigma$,  $  B^{-1}_q  F(U_{r,q})$ is the graph of a $C^1$ function $f:B_r
\to \Bbb R^2$  satisfying $\|\nabla f\|_{L^{\infty}}\le \alpha$.

\cite{Lan} and  \cite{Bre} proved the following result.
\begin{lemma}\label{56600}
Let $\Sigma$ be a $n$-dimensional oriented manifold,  $p > n$,  $0 < \alpha\le 1$, and assume that $F\in W^{2,p}_{loc}(\Sigma;\Bbb R^{n+2})$ is an immersion. Then $F$ is an
$(r, \alpha)$-immersion for all r satisfying
 \begin{align*}
 r^{1-\frac{n}{p}}\le \frac{ \alpha c}{\|{\bf A}\|_{H^{0,p}}},
 \end{align*}
where $c>0$ is a constant depending only on $n$.
\end{lemma}

The following was obtained by Hamilton [\cite{Hami},Section 14].
\begin{lemma}\label{2Hami}
Let $g_{ij}$ be a time dependent metric on Riemannian manifold $M$ for $0\le t<T\le \infty$. Assume that
\begin{align*}
\int^{T}_{0}\sup_{M}|\partial_t g|^2dt\le C<\infty.
\end{align*}
Then the metrics $g$ for all $t\in [0,T]$ are equivalent in the sense that
\begin{align*}
 e^{-\frac{1}{2}\int^{s_2}_{s_1}\sup_{M}|\partial_t g|^2dt}\le \frac{|X|_{g(s_2)}} {|X|_{g(s_1)}}\le e^{\frac{1}{2}\int^{s_2}_{s_1}\sup_{M}|\partial_t g|^2dt}
\end{align*}
for any $0\le s_1\le s_2\le T$ and any $X\in TM$.
\end{lemma}

The following is  Hamilton's interpolation  inequality proved in [\cite{Hami},Section 12].
\begin{lemma}\label{Hami}
Let   $T$ be any Tensor defined on manifold $\Sigma$. For   $1\le j\le i-1$, there exists a constant $C$ depending
only on dimension of $\Sigma$ and $i$, which is independent of the metric $g$ and connection such that
\begin{align}
\int_{\Sigma} |\nabla^{j} T|^{\frac{2i}{j}}d\mu\le C\max_{\Sigma}|T|^{2(\frac{i}{j}-1)}\int_{\Sigma} |\nabla^{i} T|^{2}d\mu.
\end{align}

\end{lemma}

We also need a corollary of Lemma \ref{2Hami}.
\begin{lemma}\label{3Hami}
Let $g_{ij}$ be a time dependent metric on $\Bbb R^{n}$ for $0\le t<T\le \infty$. Assume that
\begin{align*}
\int^{T}_{0}\sup_{x}|\partial_t g|^2dt\le C<\infty.
\end{align*}
Let $\rho(0)$ denote the smallest eigenvalue of $(g_{ij}(t))$ at $t=0$. Then  for all $t\in [0,T]$ there hold
\begin{align*}
 {\rm det}(g_{ij}(t)) & \ge e^{-\frac{n}{2}\int^{t}_{0}\sup_{x}|\partial_t g|^2dt} \rho^n(0)\\
 \max_{ij}|g^{ij}(t)|&\le C(g(t=0))e^{c\int^{t}_{0}\sup_{x}|\partial_t g|^2dt}
\end{align*}
for some $c>0$ depending only on $n$.
\end{lemma}
\begin{proof}
By the variational formula of eigenvalues for symmetric matrices,
Lemma \ref{2Hami} shows the smallest eigenvalue denoted by $\rho(t)$ of $(g_{ij}(t))$ satisfies
\begin{align*}
 \rho(t)\ge e^{-\frac{1}{2}\int^{t}_{0}\sup_{x}|\partial_t g|^2dt} \rho(0).
\end{align*}
Then the determinant of  $(g_{ij}(t))$ has a lower bound:
\begin{align*}
{\rm det}(g_{ij}(t)) \ge \rho^n(t)\ge e^{-\frac{n}{2}\int^{t}_{0}\sup_{x}|\partial_t g|^2dt} \rho^n(0).
\end{align*}
Then using the expression of inverse via adjoint matrix and  determinant, we conclude
\begin{align*}
|g^{ij}|\le \frac{1}{ \rho^n(t)}C_n \max_{ij}|g_{ij}|^{n-1}\le C_n\frac{\max_{ij}|g_{ij}(0)|^{n-1}}{\rho^n(0)}e^{c\int^{t}_{0}\sup_{x}|\partial_t g|^2dt},
\end{align*}
where $c>0$ depends only on $n$.
\end{proof}

\section{  The Cauchy problem of SMCF}

The local Cauchy problem of SMCF for arbitrary large data and non-compact $\Sigma$ is open.  This is the  problem what we aim to solve in this section. Let us consider the case $\Sigma=\Bbb R^n$, $\mathcal{M}=\Bbb R^{n+2}$. Since $\Bbb R^n$ is non-compact, it is convenient to set the data $F_0$ to coincide with some stationary solutions at $|x|=\infty$. This was widely adopted in the study of dispersive geometric PDEs. For SMCF, the  possibly most natural way is to use the space $\|u\|_{H^{\sigma}_{E}}$.
Let $E:\Bbb R^{n}\to \Bbb R^{n+2}$ be the plane defined in (1.2). And let
$
H^{\infty}_{E}:=\cap^{\infty}_{\sigma=0}{H^{\sigma}_{E}}.
$

Our main theorem of this part is as follows.
 \begin{theorem}\label{zz67bnm}
 Let $n\ge 2$, $\gamma_n=2[\frac{n}{2}]+2$. Given an immersion $F_0:\Bbb R^{n}\to\Bbb R^{n+2}$ belonging to $  {H}^{\infty}_{E}$,  there exists a $T>0$ depending only on  $\|{\bf A}_0\|_{H^{[\frac{n}{2}]+1,2}\cap H^{0,\gamma_n}}$  such that  SMCF has a unique smooth solution in $t\in[-T,T]$ with initial data $F_0$. Moreover, one has the bound
 \begin{align}\label{BvvnhM}
 \|F(t)\|_{C([-T,T];H^{\sigma}_E)}\le C({\sigma},T,\sup_{x,\vec{\alpha}\in \Bbb R^{n}, |\vec{\alpha}|=1} |dF_0(x)(\vec{\alpha})|^{-1},\|F_0\|_{H^{K_{\sigma}}_E}),\mbox{ }\forall \sigma\in \Bbb Z_+,
 \end{align}
 where $K_{\sigma}$ is an integer depending only on $\sigma,n$. And we have the estimates of second fundamental form:
 \begin{align}
 \|{\bf A}(t)\|_{C([-T,T];H^{\sigma,2})}&\le C(\sigma) \|{\bf A}_0\|_{H^{{\sigma,2}}},\mbox{ }\forall \sigma\in \Bbb Z_+\label{1ccM}\\
 \|{\bf A}(t)\|_{C([-T,T];H^{0,\infty})}&\le C(\|{\bf A}_0\|_{H^{{[\frac{n}{2}]+1,2}}\cap H^{0,\gamma_n}})\label{2ccM}.
 \end{align}
 Moreover, if the forward lifespan of $F(t)$ is  $0<T_*<\infty$, then there holds
 \begin{align}\label{ddmm}
 \lim_{T\to T_*}\sup_{t\in [0,T]}\|{\bf A}(t)\|_{ H^{0,\infty}  }=\infty.
 \end{align}
\end{theorem}

By density arguments, Theorem \ref{zz67bnm} also yields an existence of strong solutions if the initial data lies in sufficiently regular $H^{\sigma}_{E}$ space.
\begin{corollary}\label{z222}
Let $n\ge 2$, $\gamma_n=2[\frac{n}{2}]+2$, $s\in \Bbb Z_+$, $s\ge 2n+2$.  Given an immersion $F_0\in {H}^{s}_{E}\cap C^{s}(\Bbb R^n;\Bbb R^{n+2})$, there exists a $T>0$ depending only on $\|{\bf A}_0\|_{H^{[\frac{n}{2}]+1,2}\cap H^{0,\gamma_n}}$  such that  SMCF with initial data $F_0$ has a  solution $F\in C([-T,T];C^{\beta_s}(\Bbb R^n))$ for $ \beta_s=[\frac{1}{n}(2s-4)]-1$.  Moreover,  the continuous dependence on initial data in a geometric distance holds if $s$ is large.
\end{corollary}

\begin{remark}
Similar arguments can also give  Theorem \ref{zz67bnm}  and Corollary \ref{z222} for oriented Riemannian manifold  $\mathcal{M}$ and complete oriented Riemannian manifold  $\Sigma$ on which usual Sobolev embeddings for functions hold. We take $\Sigma=\Bbb R^{n}$ and  $\mathcal{M}=\Bbb R^{n+2}$ for simplicity.
\end{remark}

\begin{remark}
In the small data case, different from Corollary \ref{z222},   $s> \frac{1}{2}(n+5)$ is sufficient to solve the local Cauchy problem, see Lemma \ref{1xx1xxHxxjxxK} of this work.
The restriction  $s\ge 2n+2$ in  Corollary \ref{z222} is just to ensure $\beta_s\ge 3$. The other parts of proof to Corollary \ref{z222} only assume $s\ge \frac{n}{2}+3$. \end{remark}

\begin{remark}
 \cite{SGGoGGnGGgGG2}  proved the continuous dependence on initial data in a geometric distance if the corresponding second fundamental forms belong to $L^{\infty}_tH^{3,\infty}$.
\end{remark}

\begin{remark}
Although  Theorem \ref{zz67bnm} assumes smooth data, it is enough for most applications, e.g. the global regularity problem. The most important part of  Theorem \ref{zz67bnm} is the blow-up criterion (\ref{ddmm}).   In fact, one can always start with a smooth initial data and use the blow-up criterion (\ref{ddmm}) to test whether it evolves to a  global solution. This  strategy was widely adopted in the  literature of geometric flows,  and it still works even if one considers critical spaces. Note that (\ref{ddmm}) coincides   with  blow-up criterion of the mean curvature flow which is the parabolic analogy of SMCF.
\end{remark}

{\bf By discussions in Lemma 2.1, it suffices to consider $E(x)=(x,0,0)$. So in the rest of this paper, we always assume $E(x)=(x,0,0)$ for all $x\in \Bbb R^{n}$.}

\subsection{The perturbed Flow}

Given a  2-codimensional immersion   $F:\Sigma\to \Bbb R^{n+2}$,  the mean curvature vector  $\mathbf{H}$  is given by
\begin{align*}
\Delta_g F={\mathbf{H}},
\end{align*}
where $\Delta_g$ denotes the Laplacian on $\Sigma$ of the induced metric $g$   given by
\begin{align}\label{Hjk}
g_{ij} =\partial_{x_i}F\cdot\partial_{x_j}F.
\end{align}

Let us consider the perturbed flow
\begin{align}\label{ydu}
\left\{
  \begin{array}{ll}
     {\partial_t}F= J(F)\mathbf{H}(F)+\lambda \mathbf{H}(F), & \hbox{ } \\
    F(0,x)=F_0(x),\mbox{ }x\in \Sigma & \hbox{  }
  \end{array}
\right.
\end{align}
where $\lambda>0$,  $F_0\in H^{\infty}_{E}$ is an immersion.

Using the De Turck trick one can prove the local existence of solutions to (\ref{ydu}). We present it in the following lemma.  The new  problem which  might require attention is that $F_0$ lies in  $H^{\infty}_{E}$ that differs from the classic Sobolev spaces.

\begin{lemma}\label{localp}
For each $\lambda>0$, given an immersion $F_0\in H^{\infty}_{E}$, there exists $T_{\lambda}>0$ such that (\ref{ydu}) has a unique smooth  solution $u\in C([0,T_{\lambda}];H^{\infty}_{E})$ .
\end{lemma}
\begin{proof}
Consider the modified flow
\begin{align}\label{yydu}
\left\{
  \begin{array}{ll}
     {\partial_t}\tilde{F}= J(\tilde{F})\mathbf{H}(\tilde{F})+\lambda \mathbf{H}(\tilde{F})+\lambda d\tilde{F}(\tilde{g}^{ij}\tilde{\Gamma}^{k}_{ij}\frac{\partial}{\partial x_k}), & \hbox{ } \\
    \tilde{F}(0,x)=F_0(x),\mbox{ }x\in \Sigma & \hbox{  }
  \end{array}
\right.
\end{align}
where $\tilde{g}$ denotes  the metric induced by $\tilde{F}$, and  $\{\tilde{\Gamma}^{k}_{ij}\}$ denote the corresponding  connection coefficients. Define $\mathbb{R}^{n+2}$ valued function  $G$ to be
\begin{align}\label{yyydu}
G^1=\tilde{F}^1-x_1,...,G^{n}=\tilde{F}^n-x_n,\mbox{  }G^{n+1}=\tilde{F}^{n+1}, G^{n+2}=\tilde{F}^{n+2}.
\end{align}
Then $G$ solves
\begin{align}\label{yyccdu}
\left\{
  \begin{array}{ll}
     {\partial_t}G^{l}= J(\tilde{F})[\tilde{g}^{ij}\frac{\partial^2G^{l}}{\partial x_{i}\partial x_{j}}-\tilde{g}^{ij}\tilde{g}^{km}\frac{\partial^2 G^{n}} {\partial x_{i}\partial x_{j}}\frac{\partial \tilde{F}^{n}}{\partial x_{k}}\frac{\partial \tilde{F}^{l}}{\partial x_{m}}  ]+\lambda \tilde{g}^{ij}\frac{\partial^2G^{l}}{\partial x_{i}\partial x_{j}}, & \hbox{ } \\
   G(0,x)=F_0(x)-(x,0,0),\mbox{ }x\in \Sigma. & \hbox{  }
  \end{array}
\right.
\end{align}
Write the RHS of (\ref{yyccdu}) as $A^{ij}(t,x,G,D^1_x G)\partial^2_{ij}G+B(t,x,G,D^1_x G)$,
where  $A^{ij}$ is defined by
\begin{align*}
A^{ij}= J(\tilde{F})\Pi^{N}_{\tilde{F}}\tilde{g}^{ij}+\lambda  \tilde{g}^{ij}{\Bbb I}_{n+2},
\end{align*}
among which  $\Pi^{N}_{\tilde{F}}$ denotes the orthogonal projection from $\Bbb R^{n+2}$ onto the normal bundle of $\tilde{F}_t(\Sigma)$ for given $t$, and ${\Bbb I}_{n+2}$ denotes the identity matrix.
Now, let's  verify the strong parabolic hypothesis, i.e.
\begin{align}\label{xyyccdu}
 -  (A(\xi) +A^{*}(\xi)) \ge C|\xi|^2{\Bbb I}_{n+2}, \mbox{ }\forall \xi\in \Bbb R^{n},
\end{align}
where $A(\xi)$ is a $(n+2)\times (n+2)$ matrix defined by
\begin{align*}
 A(\xi) :=\sum_{i,j}A^{ij}\xi^i\xi^j.
\end{align*}
In fact, for each given $G$, the matrix $J(\tilde{F})\Pi^{N}_{\tilde{F}}$ satisfies
\begin{align*}
J(\tilde{F})\Pi^{N}_{\tilde{F}}=-(J(\tilde{F})\Pi^{N}_{\tilde{F}})^*,
\end{align*}
and there holds
\begin{align*}
\lambda \sum_{i,j}\tilde{g}^{ij}\xi^i\xi^j{\Bbb I}_{n+2}\ge C|\xi|^2,
\end{align*}
for some constant $C>0$ provided that
\begin{align}\label{U89nnn}
\|\nabla_x(G(t)-G_0)\|_{L^{\infty}_{t,x}}\le \alpha
\end{align}
for some sufficiently small constant  $\alpha>0$ depending on $G_0$.
Therefore, (\ref{xyyccdu}) holds for  some  $C>0$ if (\ref{U89nnn}) holds.

Then using standard arguments of Garding inequality and Friedrichs smoothing techniques(see e.g. Chapter 8, \cite{Tay}) and a bootstrap assumption like (\ref{U89nnn}), we deduce  that for $G_0\in H^{s}$ with $s>\frac{n}{2}+2$, there exists a unique solution $G\in C([0,T_{\lambda}],H^s(\Bbb R^n))\cap C^{\infty}((0,T_{\lambda}]\times \Bbb R^n) $ for some sufficiently small $T_{\lambda}>0$. Moreover, (\ref{U89nnn}) holds in $t\in[0,T_{\lambda}]$. And the regularity of solution propagates as long as $\|G\|_{L^{\infty}_tC^{2+\gamma}_x}<\infty$ for some $\gamma>0$. From these facts, we conclude that there exists a unique solution $G\in C([0,T_{\lambda}],H^{\infty}(\Bbb R^n))$ for some small  $T_{\lambda}>0$.

It remains to recover the solution $F$ for  (\ref{ydu}) from $G$.
Consider the ODE problem
\begin{align}\label{oyyccdu}
\left\{
  \begin{array}{ll}
\partial_t \Psi=X(\Psi) & \hbox{  }\\
\Psi(0,x)=x,\mbox{ }x\in \Sigma, & \hbox{  }
  \end{array}
\right.
\end{align}
where we define the vector filed $X:=\tilde{g}^{ij}\tilde{\Gamma}^{k}_{ij}\frac{\partial}{\partial x_k}$.
Then $F:=\tilde{F}\circ \Psi^{-1}$ solves  (\ref{ydu}). It remains to check what spaces $F$ belongs to.

For this purpose, we first bound $\partial^l_x \tilde{g}_{ij}$ and  $\partial^l_x \tilde{g}^{ij}$. Note that by  $\tilde{g}_{ij}=\partial_{i}\tilde{F}
\cdot \partial_{j}\tilde{F}$, it is easy to verify
\begin{align}\label{Vmm11}
\|\partial^l_x \tilde{g}_{ij}\|_{L^{\infty}_tL^{2}_x\cap L^{\infty}_x ([0,T_{\lambda}]\times\Bbb R^n)}\le C(l), \mbox{ }\forall \mbox{ }l\ge 1.
\end{align}
Let $\tilde{\rho}(t)$ denote the smallest eigenvalue of $(\tilde{g}_{ij}(t))$. Then (\ref{U89nnn}) implies there exists a constant $\tilde{\rho}_0>0$ independent of $t\in [0,T_{\lambda}]$ such that $\tilde{\rho}(t)\ge \tilde{\rho}_0$. Hence using the expression of inverse matrix by adjoint matrix and determinant, one has
\begin{align}\label{Vmm12}
\|\partial^l_x \tilde{g}^{ij}\|_{L^{\infty}_tL^{\infty}_x([0,T_{\lambda}]\times\Bbb R^n)}\le C(l,\tilde{\rho}_0), \mbox{ }\forall \mbox{ }l\ge 0.
\end{align}
Second, we estimate the difference of $\Psi$ and the identity map. Using  (\ref{oyyccdu}) and the expressions of $\{\tilde{\Gamma}^{k}_{ij}\}$  by $G$, one obtains by (\ref{Vmm11}) and (\ref{Vmm12}) that
\begin{align}\label{1oyyccdu}
\|\Psi-I\|_{L^{\infty}_tH^{l}_x([0,T_{\lambda}]\times\Bbb R^n)}\le T_{\lambda}C(l), \mbox{ }\forall l\ge 0.
\end{align}
Thus, for $T_{\lambda}$ sufficiently small, we get
\begin{align*}
\|\tilde{F}\circ \Psi^{-1}-(x_1,...,x_n,0,0)\|_{L^{\infty}_tH^{m}_x([0,T_{\lambda}]\times\Bbb R^n)}\le  C(m)
 \end{align*}
for any integer $m>\frac{n}{2}$.
Therefore, we conclude that $F:=\tilde{F}\circ \Psi^{-1}$ solves  (\ref{ydu}) and belongs to $C([0,T_{\lambda}];H^{\infty}_{E})$.
\end{proof}

\subsection{Review of evolutions of related geometric quantities }

Let $\Sigma$ be an $n$-dimensional oriented manifold, $(\mathcal{M},h)$ be a Riemannian manifold.
Suppose that $F :\Bbb I\times  \Sigma\to  \mathcal{M}$ is a family of  time-dependent 2-codimensional  immersions.   Recall that for each $t\in \Bbb I$, $g=g(t)$
denotes the induced metric on $\Sigma$, i.e. $g(X,Y) = h(F_*(X), F_* (Y))$ for $X,Y\in T\Sigma$.  Denote the associated  Riemannian  connection and volume form on $(\Sigma,g)$
by $\nabla $ and $d\mu=d\mu(t)$ respectively.
The second fundamental form ${\bf A}$ is defined by ${\bf A}(X,Y)=  D_{X}F_*(Y)  -F_*(\nabla_{X}Y)$, where $D$ denotes the induced connection on $F^*T\mathcal{M}$. The mean curvature
vector ${\bf H}$ is given by the trace ${\bf H} =\sum_{i}{\bf A}(e_i,e_i)$, where $\{e_i\}$ is any orthonormal  basis of $T\Sigma$. And we have the normal connection $\nabla^{\mathfrak{N}}$ defined by  $\nabla^{\mathfrak{N}}_{X}\eta = (\nabla_{X}\eta)^{\bot}$, i.e. the projection of $\nabla_{X}\eta$ onto the normal bundle $N F(\Sigma)$, where $\eta$ is a normal vector field along $F$.  For simplicity, we denote all these connections by $\nabla$.

The energy estimate  of \cite{SGGoGGnGGgGG1,SGGoGGnGGgGG2} has been recalled in (\ref{xv37}) of Lemma 3.1. Along the perturbed flow,  (\ref{xv37}) also holds.
We also need the evolution equations of induced metrics and volumes.
\begin{lemma}[\cite{SGGoGGnGGgGG1,SGGoGGnGGgGG2}]\label{HjnMLmm}
Let $g_{ij}=\frac{\partial F}{\partial x_i}\cdot \frac{\partial F}{\partial x_i}$ be the induced metric under the coordinates $(x_1,...,x_n)$ for $\Sigma=\Bbb R^n$. And denote ${\bf A}_{ij}={\bf A}(\frac{ \partial}{\partial x_i},\frac{ \partial}{\partial x_j})$.  Then
along the perturbed flow, one has
\begin{align}\label{7.7x}
\frac{d}{dt}g_{ij}=-2\langle J{\bf H},A_{ij} \rangle-2\lambda\langle {\bf H},A_{i,j} \rangle
\end{align}
and
\begin{align*}
\frac{d}{dt}d\mu=-\lambda |{\bf H}|^2d\mu.
\end{align*}
\end{lemma}

\subsection{ Blow-up criterion and uniform Sobolev norm estimates} \label{67qq}

The main result of this subsection is the following blow-up criterion for  the perturbed flow (\ref{ydu}).
\begin{proposition}\label{77788}
If $F_0\in H^{\infty}_{E}$ and the corresponding solution $F(t)$ to perturbed flow (\ref{ydu}) in $t\in [0,\tau]$ satisfies
\begin{align}\label{uguiin}
\sup_{t\in[0,\tau]}\|{\bf A}(t)\|_{H^{0,\infty}}\le D,
\end{align}
then for each $j\ge 0$  there holds
\begin{align}\label{Hjkk}
\|F \|_{C([0,\tau];H^{j}_{E})}\le C_1(j,F_0)
\end{align}
for some constant $C_1(j,F_0)>0$ depending only on   $j,n,D,\rho_0,\|\partial_xF_0\|_{L^{\infty}_x}$, $\sum^{L}_{j=0}\|\partial^j_x\Gamma_0\|_{L^P_x\cap L^2_x}$ and $\|{\bf A}_0\|_{H^{L,2}}$ with $L,P$ depending only on $n,j$. Here, $\rho_0$ denotes the smallest eigenvalue of $(g_{ij}(t))$ at $t=0$, $\Gamma_0$ denotes the  Christoffel symbols   associated with $F_0$.
\end{proposition}

The proof of  Proposition \ref{77788} will be divided into several lemmas.

First, we bound second fundamental forms.
\begin{lemma}
Assume that  $F_0\in H^{\infty}_{E}$ and the corresponding solution $F(t)$ to the perturbed flow (\ref{ydu}) in $t\in [0,\tau]$ satisfies (\ref{uguiin}). Then for any $l\in \Bbb N$
\begin{align}\label{2uguiin}
\sup_{t\in [0,\tau] } \|{\bf A}(t)\| _{H^{l,2}_x}\lesssim   \|{\bf A}(0)\|_{H^{l,2}_x}.
\end{align}
Moreover, for each $l\ge 0,p\ge 2$, there holds
\begin{align}\label{3uguiin}
\sup_{t\in [0,\tau] } \|{\bf A}(t)\| _{H^{l,p}_x}\lesssim  C(\|{\bf A}(0)\|_{H^{L_{l,p},2}_x}),
\end{align}
where we define
\begin{align}\label{LL}
L_{l,p}=[ \frac{lp}{2}]+1.
\end{align}
The implicit constants in (\ref{2uguiin}) and (\ref{3uguiin})  depend  only on $D,n,l,p$.
\end{lemma}
\begin{proof}
By (\ref{uguiin}) and  (\ref{xv37}), we deduce from Gronwall inequality that
\begin{align*}
\sup_{t\in [0,\tau] } \|{\bf A}(t)\| _{H^{l,2}_x}\lesssim \exp(\tau \||{\bf A}|\|^2_{L^{\infty}_{t,x}}) \|\mathbf{A}(0)\|_{H^{l,2}}
\end{align*}
for any integer  $l\ge 0$. So (\ref{2uguiin}) follows by (\ref{uguiin}).
The remained (\ref{3uguiin}) follows by (\ref{2uguiin}) and Hamilton's interpolation inequality (see (\ref{Hami})).
\end{proof}

Second, we  bound  the induced  metrics. (\ref{7.7x}) shows
\begin{align*}
 | \partial_t g| \le \|{\bf A}\|^2_{H^{0,\infty}}.
\end{align*}
Then, by (\ref{uguiin}), Lemma \ref{2Hami}, we see $(g_{ij}(t))$ are equivalent to each other for different  $t\in [0,\tau]$
in the sense that
\begin{align}\label{R5t6}
\left| \ln \frac{| X_{g(t)}|}{|X_{g(0)}|} \right|\le C(n) \tau D^4, \mbox{ }\forall X\in T\Sigma,
\end{align}
for some $C(n)>0$ depending only on $n$.

Particularly,  the volume $d\mu(t)$ is comparable to the standard Euclidean measure $dx$ with constants depending only on $n,D,\rho_0,\|\partial_xF_0\|_{L^{\infty}_{x}}$, where $\rho_0$ denotes the smallest eigenvalue of $(g_{ij}(t))$ at $t=0$. So in the following,  the norms    $L^{p}(\Bbb R^n,dx)$ and $L^{p}(\Bbb R^n,d\mu)$ are all denoted by $L^p_x$ for convenience. It suffices to keep in mind  that an implicit constant depending only    $n,D,\rho_0,\|\partial_xF_0\|_{L^{\infty}_{x}}$  occurs when one changes from one norm to the other.

Since $g_{ij}=\partial_iF\cdot \partial_jF$, (\ref{R5t6}) shows
\begin{align}\label{22xxfg}
 \|\partial_x F\|_{L^{\infty}_{t,x}([0,\tau]\times \Bbb R^n)} \le \|\partial_xF_0\|_{L^{\infty}_x}e^{C(n) \tau D^4}.
\end{align}
And directly applying the perturbed flow equation yields
\begin{align*}
\frac{1}{2}\frac{d}{dt} \|F(t)-E\|^2_{L^2_x}\le \|\mathbf{H}\|_{L^{\infty}_{t,x}}\|F(t)-E\|_{L^2_x}\le D\|F(t)-E\|_{L^2_x},
\end{align*}
which gives the
bound  of $\|F-E\|_{L^2_x}$:
\begin{align}\label{fff}
 \|F(t)-E\|^2_{L^{\infty}_t[0,\tau]L^2_x}\le  C(D,\|F_0\|_{L^2_E}).
\end{align}

Moreover, we introduce a notation for convenience. Given  a tensor ${\Bbb T}$ on $\Sigma_t$, let ${\Bbb T}^{\alpha_1,..,\alpha_s}_{\beta_1,..,\beta_r}$ denote its components under the standard coordinates $(x_1,...,x_n)$ of $\Bbb R^n$, denote
\begin{align}\label{hhhoo}
\partial^l_{x_j} {\Bbb T}=   (\partial^l_{x_j} \mathbb{T}^{\alpha_1,..,\alpha_s}_{\beta_1,...,\beta_r})\frac{\partial}{\partial x^{\alpha_1}}\otimes...\otimes\frac{\partial}{\partial x^{\alpha_r}}\otimes dx^{\beta_1}\otimes ...\otimes dx^{\beta_r},
\end{align}
We remark that  the Christoffel symbols $\Gamma$ can be viewed as a tensor by subtracting a reference connection. Since $\Sigma_t$ has  natural global  coordinates $(x_1,...,x_n)$, one can put the reference connection to be the trivial one. Thus in the following we view  Christoffel symbols $\Gamma$ as a tensor.

Third, we bound $\|\partial^l_x F\|_{L^2_x}$ with $l\ge 2$.
The rest of this subsection closely follows from Section 4 of Kuwert-Sch\"atzle \cite{KS} with some refinements. We shall need to interchange the derivatives of multilinear forms on $\Sigma$ having
normal values along $F$. If $\eta ,\eta'$ are forms of this type, we denote by $\eta*\eta'$
any normal-valued, multilinear form depending on $\eta$ and $\eta'$ in a universal,
bilinear way.

The equivalence of   $(g_{ij}(t))$ and the identity $\nabla J=0$  will be frequently used in the rest of this subsection  without emphasis.
By Lemma \ref{HjnMLmm}, (\ref{uguiin}) and (\ref{3uguiin}), for $p\ge 2,l\ge 0$ one has
\begin{align*}
\|\nabla^{l} \partial_tg\|_{L^{\infty}_tL^{p}_x\cap L^2_x([0,T^*_{\lambda}]\times\Bbb R^{n})}\le C(l,p,D,\|{\bf A}_0\|_{H^{L_{l,p},2}_x}),
\end{align*}
where $L_{l,p}$ is defined in (\ref{LL}).
And the derivatives of the Levi-Civita connection, i.e. the Christoffel symbols, are schematically written as
\begin{align}\label{5r}
\partial_t \nabla =\nabla {\bf A}*S_{\lambda}{\bf H}+{\bf A}*{S_{\lambda}}\nabla{\bf H},
\end{align}
where we denote $S_{\lambda}=J+\lambda I$ for simplicity.
So we again obtain  for $p\ge 2,l\ge 0$  that
\begin{align}\label{Ghvbmmmm000}
\|\nabla^{l}(\partial_t \nabla)\|_{L^{\infty}_tL^p_x\cap L^2_x} \le C(l,p,D,\|{\bf A}_0\|_{H^{L_{l+1,2p},2}_x}).
\end{align}
For any tensor $\mathbb{T}$, one has
\begin{align*}
\nabla^{l} \mathbb{T}=\partial^{l} \mathbb{T}+\sum^{l}_{m=1}\sum_{l_1+...+l_{m}+k\le l-1} \partial^{l_1}\Gamma...\partial^{l_{m}}\Gamma\cdot \partial^{k} \mathbb{T}.
\end{align*}
Hence, by induction, for $p\ge 2$ we get
\begin{align}\label{jnkmL}
\|\partial^{l} \mathbb{T}\|_{L^{p}_{x}}\le \|\nabla^l \mathbb{T}\|_{L^p_x}+C(\|\Gamma\|_{L^{2}_x\cap L^{l^2p}_x},...\|\partial^{l-1}\Gamma\|_{L^2_x\cap L^{\frac{l^2p}{l-1}}_x})  (\|\partial^{l-1}\mathbb{T}\|_{L^2_x\cap L^{\frac{lp}{l-1}}_x}+...\|\mathbb{T}\|_{L^2_x\cap L^{lp}_{x}}),
\end{align}
where  the implicit constant depends only on $l,n,p,D,\rho_0,\|\partial_xF_0\|_{L^{\infty}_x}$.

The following lemma bounds $\partial^l\Gamma$.
\begin{lemma}
Assume that (\ref{uguiin}) holds. Then there hold
\begin{align}
\|\partial^{l}(\partial_t\Gamma) \|_{L^{\infty}_tL^{p}_{x}([0,\tau]\times \Bbb R^n)}&\le   C(l,p,F_0),\mbox{ }\forall l\ge 0\label{4jnkmL}\\
\|\partial^{l}\Gamma\|_{L^{\infty}_tL^{p}_{x}([0,\tau]\times \Bbb R^n)}&\le   C(l,p,F_0),\mbox{ }\forall l\ge 0,\label{3jnkmL}
\end{align}
where  the implicit constant $C(l,p,F_0)$ depends only on
$$l,n,p,D,\rho_0,\|\partial_xF_0\|_{L^{\infty}_x}, \sum^{L}_{j=0}\|\partial^j\Gamma_0\|_{L^2_x\cap L^{P}_x}, \|{\bf A}_0\|_{H^{L,2}}$$
with $L,P$ depending only on $n,p,l$.
\end{lemma}
\begin{proof}
By (\ref{Ghvbmmmm000}),  one obtains bounds of $\nabla^{l}(\partial_t\Gamma)$ for $l\ge 0$. Integrating  (\ref{5r}) with respect to $t$, by (\ref{uguiin})-(\ref{3uguiin}), we get
\begin{align}
\| \Gamma(t) \|_{L^{\infty}_tL^{p}_{x}([0,\tau]\times \Bbb R^n)}&\lesssim_{p,\tau} \|\Gamma(0)\|_{L^p_x}+ C(\|{\bf A}_0\|_{H^{L_{1,2p},2}}).
\end{align}
Then  (\ref{4jnkmL}), (\ref{3jnkmL}) follow by
applying (\ref{jnkmL}) to $T=\partial_t \Gamma$, the inequality
\begin{align*}
\frac{d}{dt}\|\partial^{l}\Gamma\|^p_{L^p_x}\lesssim  \|\partial^{l}\partial_t\Gamma\|^p_{L^p_x}+ \|\partial^{l}\Gamma\|^p_{L^p_x},
\end{align*}
(\ref{3uguiin}),  (\ref{Ghvbmmmm000}) and induction.
\end{proof}

Now, we are ready to deal with bounds of $|\partial^{l}F|$ and $|\partial^{l}\nabla A|$ in $L^p$ spaces.  \cite{KS} proved the point-wise inequality
\begin{align}
&\partial^m\nabla^{i}{\bf A}-\nabla^{j+1}{\bf A}=\sum^{m}_{l=1}\partial^{l-1}(\nabla^{m+i-l}{\bf A}*{\bf A}*\partial F), \mbox{ }m+i=j+1\\
&\partial^2F={\bf A}+\partial F\cdot \Gamma.\label{56tgvbbn}
\end{align}
Then we obtain by (\ref{3jnkmL}), (\ref{3uguiin}),   (\ref{22xxfg}) and induction that
\begin{align}
\|\partial^{l}F\|_{L^{\infty}_tL^p_x([0,\tau]\times \Bbb R^n)}&\le  C(l,p,F_0), \mbox{ }\forall l\ge 2,\mbox{ } p\ge 2\label{nnFvbzzz}\\
\|\partial^{l}\nabla^{m}{\bf A} \|_{L^{\infty}_tL^p_x([0,\tau]\times \Bbb R^n)}&\le  C(l,p,m,F_0), \mbox{ }\forall l,m\ge 0,\mbox{ } p\ge 2.
\end{align}
 (We remark that the $\partial F$ term in the RHS of (\ref{56tgvbbn}) only belongs to $L^{p}_{t,x}$ with $p=\infty$.)

Therefore, (\ref{nnFvbzzz}) now leads to
\begin{align*}
\|F\|_{C([0,\tau];{\dot H}^{j}_{E})}\le    C_1(j,F_0),\mbox{ }\forall \mbox{ }j\ge 2,
\end{align*}
for some constant $C_1(j,F_0)>0$ depending only on
$$j,n,D,\rho_0,\mbox{ } \|\partial_xF_0\|_{L^{\infty}_x},\mbox{ } \sum^{L}_{l=0}\|\partial^l\Gamma_0\|_{L^p_x\cap L^2_x},\mbox{ }   \|{\bf A}_0\|_{H^{L,2}},$$
with some $L,p$ depending only on $n,j$.
Using (\ref{fff}) to bound $\|F\|_{L^2_{E}}$, we finally get
 \begin{align}\label{nnnnFvbzzz}
\|F\|_{C([0,T_{\lambda}];{H}^{j}_{E})}\le C_1(j,F_0),\mbox{ }\forall \mbox{ }j\ge 0.
\end{align}
This proves Proposition \ref{77788}.

\subsection{Proof of Theorem 7.1}

Using Lemma \ref{56600}, one can prove the following result:
\begin{lemma}\label{se2}
If $F_0\in H^{\infty}_{E}$ and the corresponding solution $F(t)$ to perturbed flow (\ref{ydu}) in $t\in [0,\tau]$ satisfies
 \begin{align*}
 \sup_{t\in [0,\tau]}{\|{\bf A}\|_{H^{0,p}}}\le \theta,
 \end{align*}
for some $p>n$, $\theta>0$. Fix $\alpha\in (0,1)$. Then there exists a constant $r>0$ depending only on $\alpha,\theta,n$ so that the following holds: Given $t\in [0,\tau]$, there exist
 a covering of $\Bbb R^n$ by   $\{U_{r,q_i}\}^{\infty}_{i=1}$,  and a sequence of isometries $\{B_{i}(t)\}^{\infty}_{i=1}$ in $\Bbb R^{n+2}$, so that  $B^{-1}_i\circ F(t)(U_{r,q_i})$ is a graph of a differentiable  function $f_i:B_{r}\subset \Bbb R^{n}\to \Bbb R^{2}$ with
\begin{align}
f_i(t,0)=0, \mbox{ }Df^{i}(t,0)=0,  \|D f_i\|_{C^0(B_{r})}\le \alpha   \mbox{ }\forall i\ge 0.\label{CfgbJ}
\end{align}
\end{lemma}

We also need a localized version of Lemma 3.4 in our previous work \cite{Li}.
\begin{lemma}\label{se3}
Given $r>0$, assume that $u:B(0,r)\subset \Bbb R^{n} \to \Bbb R^{n+2}$ is a graph, $u(x)=(x,u^1(x),u^2(x))$, $x\in B(0,r)$. Denote $B_1(y)$ to be the ball of radius $1$  in $\Bbb R^{n+2}$ centered at $y\in\Bbb R^{n+2}$. Let $\Sigma=u(B(0,r))$, and denote the induced metric, measure, the second fundamental form, the mean curvature vector  associated with  the immersion  $u$ by $g$, $\mu$, ${\bf A}$, and ${\bf H}$ respectively. View  $\mu$ as a measure in $\Bbb R^{n+2}$ with support in $\Sigma$. If
$\mu(B_1(\tilde{y})\bigcap \Sigma)+\|{\bf H}\|_{L^{n+\gamma}(d\mu)}\le D$ for any $\tilde{y}\in \Bbb R^{n+2}$ and some $D>0,\gamma>0$, then given $p>n$, for every covariant  tensor ${\bf T}$ there holds
\begin{align*}
\max_{\Sigma}|{\bf T}|_{g}\le C\left((\int_{\Sigma}|\nabla {\bf T}|^p_gd\mu)^{\frac{1}{p}}+(\int_{\Sigma}| {\bf T}|^p_gd\mu)^{\frac{1}{p}}\right),
\end{align*}
where $C$ depends only on $d,p,\gamma,D$.
\end{lemma}

Combining Lemma \ref{se2} with Lemma \ref{se3}, we get
\begin{lemma}\label{se4}
If $F_0\in H^{\infty}_{E}$ and the corresponding solution $F(t)$ to the perturbed flow in $t\in [0,\tau]$ satisfies
 \begin{align}\label{ddgh89}
 \sup_{t\in [0,\tau]}[{\|{\bf A}(t)\|_{H^{0,p}}} +{\|{\bf H}(t)\|_{H^{0,p}}}]\le \theta,
 \end{align}
for some $p>n$, $\theta>0$. Then for $\Sigma_t=F(t)(\Sigma)$ and   every covariant  tensor ${\bf T}$ there holds
\begin{align}\label{ddgfh89}
\max_{\Sigma_t}|{\bf T}|_{g}\le \mathcal{C}(n,p,\theta) \left((\int_{\Sigma_t}|\nabla {\bf T}|^p_gd\mu)^{\frac{1}{p}}+(\int_{\Sigma_t}| {\bf T}|^p_gd\mu)^{\frac{1}{p}}\right),
\end{align}
where $\mathcal{C}$ depends only on $n,p,\theta$.
\end{lemma}
 \begin{proof}
 Since Lemma \ref{se4} aims to bound the $L^{\infty}$ norm (rather than $L^p$ norms with finite $p$), it is convenient   to work in a small neighborhood  of any given point on $\Sigma_t$. Let $\alpha\in (0,1)$ be given. Lemma \ref{se2} has shown that for given $t\in [0,\tau]$, $x\in \Sigma$, there exists an open set $U_{r,\alpha}\subset \Sigma $ containing $x$  and an Euclidean isometry $B$ such that $ B^{-1}\circ F(U_{r,\alpha})\subset \Bbb R^{n+2}$ is a graph of some differential function $f:B(0,r)\subset \Bbb R^n\to \Bbb R^{2}$ with $\|D f\|_{C^0(B(0,r))}\le \alpha$. Since ${\Bbb T}$ is a covariant  tensor, and (\ref{ddgh89}), (\ref{ddgfh89}) are   independent of coordinates, it suffices to assume $F(U_{r,\alpha})$ is of the form
$$\{B  (y,f^1(y),f^2(y)):  |y|<r\}.$$
 By (\ref{CfgbJ}), the induced metric $g_{ij}$ in the new coordinates $y$ satisfies
 \begin{align*}
 |g_{ij}|\le 2.
 \end{align*}
 Thus $\mu(F(U_{r,\alpha})\cap B_1(\tilde{y}))\le C(r,n)$ for all $\tilde{y}\in \Bbb R^{n+2}$. Then applying Lemma \ref{se3} to $F(U_{r,\alpha})$, one obtains the bound of $|{\Bbb T}|_{g}$ at the given points $t$ and $x$. Since the LHS of (\ref{ddgfh89}) is $L^{\infty}$ norm,
  Lemma \ref{se4} follows by repeating  the above arguments for every $t$ and $x$.
\end{proof}

Let $T_{\lambda}>0$ be the lifespan of the perturbed flow.  Let $T^{*}_{\lambda}>0$  be the maximal time in $[0, \min(1,,T_{\lambda})]$ such that
\begin{align}\label{xxfg}
\sup_{t\in [0,T^{*}_{\lambda}]}\|{\bf A}(t)\|_{H^{0,\infty}_x}\le C_0,
\end{align}
where $C_0>0$ is a constant to be determined later.

By Lemma \ref{se4} here and some modifications of  arguments in  Lemma 3.9 of our previous work \cite{Li}, one can prove
\begin{lemma}\label{98}
Assume that  $F_0\in H^{\infty}_{E}$ and (\ref{xxfg}) holds. For well chosen constants $C_0,T_0$  depending only on $n$ and $\|{\bf A}_0\|_{H^{[\frac{n}{2}]+1,2}},\|{\bf A}_0\|_{H^{0,\gamma_n}}$, $\gamma_n=2[\frac{n}{2}]+2$,   the solution of perturbed flow (\ref{ydu}) satisfies
\begin{align}\label{uixxjjn}
\sup_{t\in[0,T]}\|{\bf A}(t)\|_{H^{0,\infty}}\le  \frac{1}{2}C_0.
\end{align}
\end{lemma}
\begin{proof}We sketch the proof. Use (\ref{xv37}) and (\ref{xxfg}) to get
\begin{align}\label{KK000}
\sup_{t\in[0,T^*_{\lambda}]}\|{\bf A}(t)\|_{H^{[\frac{n}{2}]+1,2}}\le   \|{\bf A}_0\|_{H^{[\frac{n}{2}]+1,2}}e^{T^*_{\lambda}C^2_0}.
\end{align}
By the equation
\begin{align}
\frac{d}{dt}\|{\bf A}\|^{\gamma}_{H^{0,\gamma}  }\le C(\gamma) (\|\nabla {\bf A}\|^{\gamma}_{H^{0,\gamma}}+\|{\bf A}\|^2_{H^{0,\infty}}\| {\bf A}\|^{\gamma}_{H^{0,\gamma}}),
\end{align}
 and the bound (by Hamilton's interpolation inequality)
\begin{align}\label{gbb}
 \|\nabla {\bf A}\|^{2[\frac{n}{2}]+2}_{H^{0,2[\frac{n}{2}]+2}_x}\lesssim \|  {\bf A}\|^{2}_{H^{[\frac{n}{2}]+1,2}} \| {\bf A}\|^{2[\frac{n}{2}]}_{H^{0,\infty}},
\end{align}
we obtain for $\gamma_n=2[\frac{n}{2}]+2>n$ that
\begin{align}
 \|{\bf A}\|^{\gamma_n}_{H^{0,\gamma_n} }\le
  C_ne^{T^*_{\lambda}C^2_0}\|{\bf A}_0\|^{\gamma_n}_{H^{0,\gamma_n} }+T^{*}_{\lambda}e^{T^*_{\lambda}C^2_0}\left( C_n C^{2[\frac{n}{2}]}_0 \|{\bf A}_0\|^2_{H^{[\frac{n}{2}]+1,2}}e^{2T^*_{\lambda}C^2_0}\right)
\end{align}
where we applied (\ref{KK000}), (\ref{xxfg}) to bound the RHS of (\ref{gbb}) while using Gronwall inequality.
Assume that   $T^{*}_{\lambda}$ satisfy
\begin{align}\label{hbnjjP}
T^{*}_{\lambda}C^2_0+T^{*}_{\lambda}\left( C_n C^{2[\frac{n}{2}]}_0 \|{\bf A}_0\|^2_{H^{[\frac{n}{2}]+1,2}}e^{2T^*_{\lambda}C^2_0}\right)&\le \frac{1}{4}.
\end{align}
Then  Lemma \ref{se4} shows
\begin{align*}
 \sup_{t\in [0,T^{*}_{\lambda}]}\|{\bf A}(t)\|_{H^{0,\infty}}&\le \mathcal{C}(n,\gamma_n,2C_n\|{\bf A}_0\|_{H^{0,\gamma_n}}+1)( \|\nabla {\bf A}\|_{H^{0,\gamma_n}}+ \|  {\bf A}\|_{H^{0,\gamma_n}})\\
  &\le \mathcal{C}(n,\gamma_n,2C_n\|{\bf A}_0\|_{H^{0,\gamma_n}}+1)( C_n\|  {\bf A}\|^{\frac{2}{\gamma_n}}_{H^{[\frac{n}{2}]+1,2}} \| {\bf A}\|^{\frac{2[\frac{n}{2}]}{\gamma_n}}_{H^{0,\infty}}),
\end{align*}
which further gives
\begin{align*}
 \sup_{t\in [0,T^{*}_{\lambda}]}\|{\bf A}(t)\|_{H^{0,\infty}}
& \le \left[C_n\mathcal{C}(n,\gamma_n,2C_n\|{\bf A}_0\|_{H^{0,\gamma_n}}+1)\right]^{\frac{\gamma_n}{2}}  \|  {\bf A}\|_{H^{[\frac{n}{2}]+1,2}}  \\
& \le \left[C_n\mathcal{C}(n,\gamma_n,2C_n\|{\bf A}_0\|_{H^{0,\gamma_n}}+1)\right]^{\frac{\gamma_n}{2}}\|{\bf A}_0\|_{H^{[\frac{n}{2}]+1,2}}e^{T^*_{\lambda}C^2_0}.
\end{align*}
Therefore, taking $C_0>0$ to be sufficiently large  to verify
\begin{align*}
C_0\ge 2 \left[C_n\mathcal{C}(n,\gamma_n,2C_n\|{\bf A}_0\|_{H^{0,\gamma_n}}+1)\right]^{\frac{\gamma_n}{2}}e^2\|{\bf A}_0\|_{H^{[\frac{n}{2}]+1,2}},
\end{align*}
and letting $T  >0$ be sufficiently small to fulfill
\begin{align*}
 T C^2_0+
T \left( C_n C^{2[\frac{n}{2}]}_0 \|{\bf A}_0\|^2_{H^{[\frac{n}{2}]+1,2}}e^{2 C^2_0}\right)&\le \frac{1}{4},
\end{align*}
(note that this corresponds to  (\ref{hbnjjP})) we see
\begin{align*}
&\sup_{t\in [0,T]}\|{\bf A}(t)\|_{H^{0,\infty}}\le \frac{1}{2}C_0.
\end{align*}
So  (\ref{uixxjjn})  follows, and $T$ depends only on $\|{\bf A}_0\|_{H^{0,\gamma_n}}$, $\|{\bf A}_0\|_{H^{[\frac{n}{2}]+1,2}}$ as desired.
\end{proof}

Let us review what has been done.
Let $\lambda\in (0,1)$. Consider the flow (\ref{ydu}).
Define $T^*_{\lambda}\in [0,\min(1,T_{\lambda})]$ to be the maximal time such that (\ref{xxfg}) holds.
We have shown $T^*_{\lambda}\ge T$ for some $T,C_0>0$  depending only on $n$, $\|{\bf A}_0\|_{H^{[\frac{n}{2}]+1,2}}$, $\|{\bf A}_0\|_{L^{\gamma_n}_x}$. Then  by Proposition \ref{77788}  and  (\ref{uixxjjn}) one has
\begin{align}\label{nnFvb}
\|F_{\lambda}\|_{C([0,T];H^{j}_{E})}\le C(F_0,T,j)\mbox{ }\forall j\ge 0.
\end{align}
Then selecting a subsequence $\lambda_m\to 0$, there exists a smooth map $F\in H^{\infty}_{E}$ such that $F_{\lambda_m}\to F$ locally in $C([0,T];H^{j}_{E})$ for any $j\in\Bbb Z_+$.
So $F$ solves SMCF with initial data $F_0$. The continuous dependence on initial data in geometric distance  has been proved in \cite{SGGoGGnGGgGG2}.

\subsection{Improved uniform bounds and Non-smooth Data}

In this subsection, we first prove  the bound  (\ref{BvvnhM}).
\begin{lemma}\label{88}
Let $L\in \Bbb Z_+$, $P\ge 2$, and $F_0\in H^{L+[\frac{n}{2}]+1}_{E}$ be a 2-codimensional  immersion. Then
\begin{align*}
&\sum^{L}_{j=0}\|\partial^j\Gamma_0\|_{L^P_x\cap L^2_x}+ \|F_0\|_{L^2_{E}}+\|\partial_x F_0\|_{L^{\infty}_x}+\|{\bf A}_0\|_{ H^{L,2}}\\
& \le C(\|F_0\|_{H^{L+[\frac{n}{2}]+1}_{E}},\sup_{x\in\Bbb R^n} \max_{\vec{\alpha}\in \Bbb R^{n}, |\vec{\alpha}|=1} |dF_0(\vec{\alpha})|^{-1}).
\end{align*}
\end{lemma}
\begin{proof}
Let $g$ denote the induced metric.
Observe that the  $\{ \partial^{l}_xg_{ij}\}^{L}_{l=0} $ and $\{\partial^l_x F_0\}^{L}_{l=0}$ involved parts are easy to dominate by $ \|F_0\|_{H^{L+[\frac{n}{2}]+1}_{E}}$. The relatively troublesome part is to bound $\| \partial^{l}g^{ij}\|_{L^{\infty}_x}$ for $l=0,1,...,L$.
Using the expression of inverse matrix via adjoint matrix and  determinant, it reduces to bound
\begin{align*}
\frac{1}{{\rm det}((g_{ij}))}.
\end{align*}
Let $\rho$ denote the smallest eigenvalue of $(g_{ij})$, and denote $\frac{\partial F_0}{\partial \vec{\alpha}}=\sum^n_{i=1}\alpha_i\partial_iF_0$, then
\begin{align*}
\rho&=\min_{\vec{\alpha}\in \Bbb R^{n}, |\vec{\alpha}|=1}\sum_{i,j}\alpha_ig_{ij}\alpha_j=\min_{\vec{\alpha}\in \Bbb R^{n}, |\vec{\alpha}|=1}|\frac{\partial F_0}{\partial \vec{\alpha}}|^2.
\end{align*}
Since $F_0$ is an immersion, the kernel of $dF_0(x)$  is $\{0\}$. Thus there exists $\beta(x)>0$ such that
\begin{align}
\min_{\vec{\alpha}\in \Bbb R^{n}, |\vec{\alpha}|=1}|\frac{\partial F_0}{\partial \vec{\alpha}}|^2\ge \beta^2(x). \label{y7899}
\end{align}
Let $\imath:\Bbb R^{n}\to \Bbb R^{n+2}$ denote the immersion $\imath(x)=(x,0,0)$. By $F_0\in H^{[\frac{n}{2}]+2}_{E}$, there exists $R>0$ sufficiently large such that
\begin{align}\label{0tt11}
\|dF_0-\imath\|_{L^{\infty}_{x}(|x|\ge R)}\le \frac{1}{100},
\end{align}
from which one obtains
\begin{align}
\inf_{|x|\ge R}\min_{\vec{\alpha}\in \Bbb R^{n}, |\vec{\alpha}|=1}|\frac{\partial F_0}{\partial \vec{\alpha}}|^2\ge \frac{1}{4}. \label{y9899}
\end{align}
Thus applying (\ref{y7899}) in $\{x\in \Bbb R^n:|x|\le R\}$ and  (\ref{y9899}) in  $\{x\in \Bbb R^n:|x|\ge R\}$ respectively, we infer from the continuity of $dF_0$ that
\begin{align}
\inf_{x\in \Bbb R^n}\min_{\vec{\alpha}\in \Bbb R^{n}, |\vec{\alpha}|=1}|\frac{\partial F_0}{\partial \vec{\alpha}}|^2\ge \beta^2 \label{y1099}
\end{align}
for some $\beta>0$. Therefore, one has
\begin{align*}
\frac{1}{{\rm det}((g_{ij}))}\le \frac{1}{\rho^n}\le \beta^{-2n}.
\end{align*}
This  together with Sobolev embeddings dominates $\partial^l_x\Gamma_0$. And writing   $\nabla ^{l}{\bf A}_0$ via $\partial^{m}_xF_0$ and $\partial^{j}_x\Gamma_0$ yields  bounds of $\|{\bf A}_0\|_{H^{l,2}}$.    So our lemma follows.
\end{proof}

Hence,  (\ref{BvvnhM}) follows by Proposition \ref{77788}, Lemma \ref{98} and Lemma  \ref{88}. And Theorem 7.1 has been done.

\subsection{Proof of  Corollary \ref{z222}}
Finally, let us consider non-smooth data.
Given $L\ge [\frac{n}{2}]+2$,  let $F_0\in H^{L}_{E}$ be a 2-codimensional immersion. We aim to find a family of smooth immersions $F^{\delta}_{0}$ such that
\begin{align}
 \sup_{\delta\in (0,1)}\|F^{\delta}_0\|_{ H^{L}_{E}}&\le C(F_0)\label{8ujj}\\
 \sup_{\delta\in (0,\delta_*)}\sup_{x\in \Bbb R^n} \max_{\vec{\alpha}\in \Bbb R^{n}, |\vec{\alpha}|=1} |dF^{\delta}_0(\vec{\alpha})|^{-1}&\le C(F_0),\label{9ujj}
\end{align}
where $\delta_*>0$ is some sufficiently small constant.

Recall $\imath(x)=(x,0,0)$.
Let $F^{\delta}_{0}$ be $F_0*\eta_{\delta}$, where $\eta_{\delta}=\delta^{-n}\eta(\frac{x}{\delta})$, and $\eta$ is the standard modifier in $\Bbb R^n$. Then
\begin{align*}
\|F^{\delta}_0\|_{H^{l}_{E}}&\le \|F_0*\eta_{\delta}-\imath\|_{H^{l}}\le \|(F_0-\imath)*\eta_{\delta} \|_{H^{l}}+ \|\imath(x)*\eta_{\delta} -\imath\|_{H^{l}}\\
&=\|(F_0-\imath)*\eta_{\delta} \|_{H^{l}}\le C_n \|F_0\|_{H^{l}_{E}},
\end{align*}
where in the last equality we used
\begin{align*}
\int_{\Bbb R^n}y\eta_{\delta}(y)dy=0,\mbox{ } \int_{\Bbb R^n}x\eta_{\delta}(y)dy=x.
\end{align*}
Hence (\ref{8ujj}) has been done.

Given $\vec{\alpha}\in \Bbb R^{n}$  with $|\vec{\alpha}|=1$, by (\ref{y1099}) there must exist  some $\beta>0$ independent of $x,\vec{\alpha}$ such that  $|dF_0(x)(\vec{\alpha})|>\beta$ for all $x\in \Bbb R^n$.  Meanwhile, by the support of $\eta_{\delta}$ and $\eta_{\delta}*x=x$, one has
\begin{align*}
1_{|x|\ge 2R}(x)[dF^{\delta}_0(x)-d\imath]=1_{|x|\ge 2R}(x)\int_{\Bbb R^n} \eta_{\delta}(y)1_{|x-y|\ge  R}(y) (dF_0-d\imath)(x-y)dy
\end{align*}
for $R\ge 4$. Thus, for $R$ sufficiently large, (\ref{0tt11}) shows
\begin{align*}
\sup_{\delta\in (0,1)}\|dF^{\delta}_0(x)-d\imath\|_{L^{\infty}_x(|x|\ge R)}\le \frac{1}{10},
\end{align*}
where $\imath(x)=(x,0,0)$. In the ball $B_R:=\{x\in \Bbb R^n:|x|\le R\}$, the continuity of $dF_0$  implies that   $dF^{\delta}_0(\vec{\alpha})$ converges to $ dF_0(\vec{\alpha})$ uniformly for $x\in B_{R}$ and all $\alpha\in \Bbb R^n$ of unit length. Combining these two facts, we conclude that there exists $\delta_*>0$ sufficiently small and
some $\beta'>0$ so that
$$
\inf_{\delta\in (0,\delta_*)}\inf_{x\in\Bbb R^n} \min_{\vec{\alpha}\in \Bbb R^{n}} |dF^{\delta}_0(\vec{\alpha})|\ge \beta' >0.
$$
Therefore, we have verified  (\ref{9ujj}).

We also need the following lemma.
\begin{lemma}\label{yuhj000}
Let $n\ge 2$, $\gamma_n=2[\frac{n}{2}]+2$, $l\in \Bbb Z_+$ with $l\ge [\frac{n}{2}]+2$. If the immersion  $F_0\in C^{l}(\Bbb R^n;\Bbb R^{n+2})$ and $F_0\in H^{l}_{E}$, then  the associated $F^{\delta}_0$, second fundamental form ${\bf A}^{\delta}_0$ satisfy
 \begin{align*}
 \sup_{\delta\in (0,\delta_*)}\|{\bf A}^{\delta}_0\|_{ H^{l-2,\infty} }+ \sup_{\delta\in (0,\delta_*)}\|{\bf A}^{\delta}_0\|_{ H^{l-2,2} }+ \sup_{\delta\in (0,\delta_*)}\|{\bf A}^{\delta}_0\|_{ H^{0,\gamma_n} }\le  C(F_0).
 \end{align*}
\end{lemma}
\begin{proof}
First, it is easy to verify  $F^{\delta}_0\in C^{l}(\Bbb R^n;\Bbb R^{n+2})$ with bounds independent of $\delta$. Denote ${ }^{\delta}g$ the corresponding induced metric to $F^{\delta}_0$.
Second, the derivatives  $|\partial^{m}_x\mbox{ }{ }^{\delta}g_{ij}|$ are uniformly bounded in $x\in \Bbb R^n$ for all $0\le m\le l-1$ independent of $\delta$. As before using the $\delta$ independent   bound of the smallest eigenvalue of $({ }^{\delta}g_{ij})$, the derivatives  $|\partial^{m}_x\mbox{ }{ }^{\delta}g^{ij}|$ are uniformly bounded in $x\in \Bbb R^n$ for all $0\le m\le l-1$. Third,
writing   $\nabla^{l}{\bf A}^{\delta}$ via $\partial^{m}_xF^{\delta}_0$, ${ }^{\delta}g $ and $\partial^{j}_x\mbox{ }{ }^{\delta}\Gamma$ yields  uniform bounds of $\|{\bf A}^{\delta}_0\|_{H^{l-2,\infty}}$.
Finally, with these $C^{j}$ norms in hand, we get the desired  $L^2_x$ norms by  $F_0\in H^{l}_{E}$. The $H^{0,\gamma_n}$ bound  follows by H\"older inequality.
\end{proof}

{\it Now, assume that $F_0$ is a given immersion  fulfilling the conditions in Corollary \ref{z222}.}

{\bf Step 1} For each $\delta\in (0,\delta_*)$, since $F^{\delta}_{0}\in H^{\infty}_{E}$, Theorem 7.1 yields a smooth solution $F^{\delta}(x,t)$ to SMCF in $C([-T_{\delta},T_{\delta}];H^{\infty}_{E})$. By Lemma \ref{yuhj000}  and the fact that $T_{\delta}>0$ can be chosen to depend only on $n,\|{\bf A}^{\delta}_0\|_{H^{[\frac{n}{2}]+1}},\|{\bf A}^{\delta}_0\|_{L^{\gamma_n}_x}$, we see there exists $T>0$ independent of $\delta\in (0,\delta_*)$ such that $F^{\delta}(x,t)$ is smooth w.r.t $x\in \Bbb R^n$ for all $t\in [-T,T]$. Moreover, Theorem 7.1 and Lemma \ref{yuhj000}  give
 \begin{align}\label{klkl}
 \sup_{\delta\in (0,\delta_*)}[\|{\bf A}^{\delta}(t)\|_{C([-T,T]; H^{0,\infty} )} + \|{\bf A}^{\delta}(t)\|_{C([-T,T]; H^{s-2,2} ) }]\le  C(F_0),
 \end{align}
 which combined with Hamilton's interpolation theorem (e.g. Lemma 6.4)  implies there exists some  $p>n$
 \begin{align}\label{78pp}
 \sup_{\delta\in (0,\delta_*)} \|{\bf A}^{\delta}(t)\|_{C([-T,T]; H^{m,p}) } \le  C(F_0),\mbox{ }\forall 0\le m <\frac{2s-4}{n}.
 \end{align}
Using the point-wise equations
 \begin{align*}
 |\partial_t F^{\delta}|&\le \|{\bf H}^{\delta}\|_{L^{\infty}_{x}}\\
  |\partial_t \mbox{ }^{\delta}g_{ij}|&\le \|{\bf A}^{\delta}\|^2_{H^{0,\infty} }|{}^{\delta}g_{ij}|,
 \end{align*}
(see (\ref{7.7x})) we deduce that
 \begin{align}\label{4rf}
\sup_{\delta\in (0,\delta_*)}\|F^{\delta}\|_{C([-T,T]; C^1(\Bbb R^{n})) }\le   C(F_0).
 \end{align}
By Lemma \ref{se4},   (\ref{78pp}) and the conservation of local volume along the SMCF, we further obtain
 \begin{align}\label{88pp}
 \sup_{\delta\in (0,\delta_*)} \|{\bf A}^{\delta}(t)\|_{C([-T,T]; H^{j,\infty}) } \le  C(F_0),\mbox{ }\forall 0\le j<\frac{2s-4}{n}-1.
 \end{align}
{\bf Step 2.} From Lemma \ref{3Hami} and (\ref{klkl}), the metrics ${}^{\delta}g(t)$ are equivalent up to a uniform ratio constant depending only on $F_0,n$.
By (\ref{5r}), we get
 \begin{align}\label{90pp}
 \sup_{\delta\in (0,\delta_*)}\|\partial_t \mbox{ }^{\delta}\Gamma\|_{H^{j,\infty} }\le   C(F_0),\mbox{ }\forall 0\le j<\frac{2s-4}{n}-2.
\end{align}
Integrating  (\ref{5r}) with respect to $t$, one has
 \begin{align}\label{91pp}
 \sup_{\delta\in (0,\delta_*)}\| { }^{\delta}\Gamma\|_{H^{0,\infty}}\le     C(F_0).
\end{align}
Meanwhile, similar to (\ref{jnkmL}), for any tensor $\mathbb{T}$, one has
\begin{align}\label{92pp}
\|\partial^{l} \mathbb{T}\|_{L^{\infty}_{x}}\le C(F_0)\|\nabla^l \mathbb{T}\|_{L^{\infty}_x}+C(F_0)(\sum^{l-1}_{m=0}\|\partial^{m}\Gamma\|_{ L^{\infty}_x}  +\|\partial^{m}\mathbb{T}\|_{  L^{\infty}_x}).
\end{align}
By (\ref{90pp}), (\ref{91pp}), (\ref{92pp}) and induction, we
obtain
\begin{align*}
 \sup_{\delta\in (0,\delta_*)}\|\partial^{j}(\partial_t\mbox{ }^{\delta}\Gamma) \|_{C([-T,T];C^0(\Bbb R^n))}&\le   C( F_0),\mbox{ }\forall 0\le j<\frac{2s-4}{n}-2 \\
 \sup_{\delta\in (0,\delta_*)}\|\partial^{j}\mbox{ }^{\delta}\Gamma\|_{C([-T,T];C^0(\Bbb R^n))}&\le   C(F_0),\mbox{ }\forall 0\le j<\frac{2s-4}{n}-2.
\end{align*}
Then by  (\ref{4rf}), the two identities in (\ref{56tgvbbn}) and induction, one deduces
\begin{align*}
 \sup_{\delta\in (0,\delta_*)}\|\partial^{2}_xF^{\delta}\|_{C([-T,T];C^j(\Bbb R^n))}&\le  C( F_0), \mbox{ }\forall  0\le j<\frac{2s-4}{n}-2,\\
  \sup_{\delta\in (0,\delta_*)}\| F^{\delta}\|_{C([-T,T];C^1(\Bbb R^n))}&\le C( F_0).
\end{align*}
Therefore, we conclude
\begin{align}
 \sup_{\delta\in (0,\delta_*)}\|F^{\delta}\|_{C([-T,T];C^l(\Bbb R^n))}\le    C_l( F_0),\mbox{ }\forall \mbox{ } 0\le l<\frac{2s-4}{n}.\label{11z}
\end{align}
It follows from (\ref{11z}) that there exists a sequence $\delta_{m}\to 0$ such that $F^{\delta_m}$ converges to $F$ in $C([-T,T];C^{\beta}(\Bbb R^n))$ for any  $\beta\in [0,\beta_s)$  with $\beta_s= [\frac{2s-4}{n}]-1$. Since $s\ge 2n+2$, $F^{\delta_m}$ must converge in $C^2$. Thus $F \in C([-T,T];C^{\beta_s}(\Bbb R^n))$  solves SMCF point-wisely.
The continuous dependence claimed in  Corollary \ref{z222} follows by  \cite{SGGoGGnGGgGG2}. Therefore, Corollary \ref{z222} has been proved.

\section*{Acknowledgement}
The author owes sincere gratitude  to the referees for the insightful comments which largely improved the presentation of this work.
 This work is partially supported by NSF-China Grant-1200010237 and Grant-11631007.

\end{document}